\newcommand{\B}[1]{\mbox{\boldmath $#1$}}
\newcommand\x{\times}
\newcommand\z{\colorbox{gray}{$\times$}}
\DeclareMathOperator{\rank}{rank}
\DeclareMathOperator{\rk}{rank}
\DeclarePairedDelimiter{\norm}{\lVert}{\rVert}
\newcommand{\tikzmark}[1]{\tikz[overlay,remember picture] \node (#1) {};}
\newcommand{\tikzdrawbox}[3][(0pt,0pt)]{%
    \tikz[overlay,remember picture]{
    \draw[#3]
      ($(left#2)+(-0.3em,0.9em) + #1$) rectangle
      ($(right#2)+(0.2em,-0.4em) - #1$);}
}
\newenvironment{code1}{%
  % Swap `:' and `colon'...
  \mathcode`\:="603A  % TeXbook pp 134, 154, 359 (top)
  % For original colon       \mathcode`\:="303A  % TeXbook p 344
  \def\colon{\mathchar"303A}
  \par
  \upshape
  \begin{list} % To give indentation
    {} {\leftmargin = 0.0cm}
  \item[]
    \begin{tabbing}
      % Default tab stops
      \hspace*{.3in} \= \hspace*{.3in} \=
      \hspace*{.3in} \= \hspace*{.3in} \=
      \hspace*{.3in} \= \hspace*{.3in} \= \kill
    }{\end{tabbing}\end{list}}
\newtheorem{remark}[theorem]{Remark}
\newcommand\TheTitle{Fast Hessenberg reduction of some rank structured matrices}
\newcommand\TheShortTitle{Hessenberg reduction of rank structured matrices}
\newcommand\TheAuthors{L. Gemignani, L. Robol}
\headers{\TheShortTitle}{\TheAuthors}
\title{\TheTitle\thanks{This work was partially supported by
    GNCS/IndAM, by the Research Council KU Leuven, project
    CREA/13/012, and by the Interuniversity Attraction Poles
    Programme, initiated by the Belgian State, Science Policy Office,
    Belgian Network DYSCO.}}
\author{L. Gemignani\thanks{Dipartimento di Informatica, Universit\`a
    di Pisa, Pisa, Italy, \email{l.gemignani@di.unipi.it}} \and
  L. Robol\thanks{Department of Computer Science, KU Leuven, Belgium,
    \email{leonardo.robol@cs.kuleuven.be}} }
\begin{document}

\maketitle

\begin{abstract}
We develop two fast algorithms for Hessenberg reduction of a
structured matrix $A = D + UV^H$ where $D$ is a real or unitary
$n \times n$ diagonal matrix and $U, V \in\mathbb{C}^{n \times k}$.
The proposed algorithm for the real case exploits a two--stage
approach by first reducing the matrix to a generalized Hessenberg
form and then completing the reduction by annihilation of the
unwanted sub-diagonals.  It is shown that the novel method requires
$O(n^2k)$ arithmetic operations and it is significantly faster than
other reduction algorithms for rank structured matrices. The method
is then extended to the unitary plus low rank case by using a block
analogue of the CMV form of unitary matrices.  It is shown that a
block Lanczos-type procedure for the block tridiagonalization of
$\Re(D)$ induces a structured reduction on $A$ in a block staircase
CMV--type shape.  Then, we present a numerically stable method for
performing this reduction using unitary transformations and we show
how to generalize the sub-diagonal elimination to this shape, while
still being able to provide a condensed representation for the
reduced matrix.  In this way the complexity still remains linear in
$k$ and, moreover, the resulting algorithm can be adapted to deal
efficiently with block companion matrices.
\end{abstract}

\begin{keywords}
Hessenberg reduction, Quasiseparable matrices, Bulge chasing, CMV matrices, Complexity.
\end{keywords}

\begin{AMS}
 65F15  
\end{AMS}

\section{Introduction}
 
 Let $A = D + UV^H$ where $D$ is a real or unitary  $n \times n$ 
  diagonal matrix and 
  $U, V \in \mathbb{C}^{n \times k}$.  Such matrices do arise commonly in the numerical treatment of 
structured (generalized) eigenvalue problems \cite{ACL,AG}.  We consider the problem of reducing
  $A$ to upper Hessenberg form using unitary transformations, i.e., to find a unitary matrix
  $Q$ such that $QAQ^H = H = (h_{ij})$ and  $h_{ij} = 0$ for $i > j + 1$.   Specialized 
algorithms exploiting the rank structure of $A$ have complexity $O(n^2)$ whenever $k$ is a small 
constant independent of $n$ \cite{EGG_red,DB_red}.  However,  for applications to generalized eigenproblems 
the value of $k$ may be moderate or even large so that  it is worthwhile to ask for the algorithm 
to be cost efficient w.r.t. 
the size $k$ of the correction as well.

The Hessenberg 
reduction of a square matrix is the first basic step  in computing its eigenvalues.   Eigenvalue  computation 
for  (Hermitian) matrices modified by low rank perturbations is a classical problem  arising in many 
applications \cite{AG}.  More recently  methods for diagonal plus  low rank matrices have been used  
in combination with interpolation  techniques in order to solve 
generalized nonlinear eigenvalue problems \cite{ACL,BR}.  Standard Hessenberg  reduction algorithms for 
rank structured 
matrices \cite{EGG_red,DB_red} are both theoretically and practically 
ineffective as  the size of the correction  increases since their 
complexities depend quadratically  or even cubically on $k$, or
they suffer from possible instabilities \cite{bini2015quasiseparable}.
The aim of 
this paper is to describe a  novel efficient reduction scheme which attains 
the cost of $O(n^2k)$ arithmetic operations (ops).  For a real $D$ the incorporation of this scheme into the fast 
QR--based eigensolver  for rank--structured Hessenberg matrices given in \cite{EGG_alg} thus 
yields an eigenvalue method with  overall cost  $O(n^2k)$ ops. Further, in the unitary case  the design of such a scheme 
is an essential  step towards the development of an efficient eigenvalue solver for block companion matrices and, moreover,
the techniques employed in the reduction are  of independent interest  for computations with matrix valued orthogonal
polynomials on the unit circle \cite{arli,barel,simon}.

The proposed algorithm consists of two stages including a first
intermediate reduction to a generalized $\tau_k$-Hessenberg form\footnote{%
  We use the term \emph{generalized $\tau_k$-Hessenberg matrix} to mean
  a matrix with only $\tau_k$ non-zero subdiagonals. 
},
where $\tau_k=k$ in the real case and $\tau_k=2k$ in the unitary case.
A two-stage approach have been shown to be beneficial for blocked and
parallel computations \cite{bls,KKQQ}.  Our contribution is to show
that the same approach can conveniently be exploited in the framework
of rank--structured computations.  Specifically, to devise a fast
reduction algorithm we find $Q$ as the product of two unitary matrices
$Q = Q_2 Q_1$ determined as follows:
  \begin{description}
    \item[Reduction to banded form] 
      $Q_1 D Q_1^H$ is a banded matrix of bandwidth $\tau_k$ and
      $Q_1 U$ is upper triangular. In particular, this implies that
      the matrix $B := Q_1 A Q_1^H$ is in generalized $\tau_k$-Hessenberg form, 
      that is, its elements are $0$ below the $\tau_k$-th subdiagonal. 
    \item[Subdiagonal elimination] 
      We compute a unitary  matrix $Q_2$ such that 
      $Q_2 B Q_2^H$ is in  upper Hessenberg form.  The process employs a sequence of Givens rotations used 
       to annihilate the last  $\tau_k-1$ subdiagonals of the 
      matrix $B$. 
  \end{description}
  
  It is shown that both steps can be accomplished in $O(n^2k)$ ops.
  For a real $D$ the algorithm for the reduction to banded form relies
  upon an adaptation of the scheme proposed in \cite{AG} for bordered
  matrices.  The subdiagonal annihilation procedure employed at the
  second step easily takes advantage of the data--sparse
  representation of the matrices involved in the updating process.
  The extension to the unitary setting requires a more extensive analysis and
  needs some additional results.  By exploiting the relationships
  between the computation at stage 1 and the block Lanczos process we
  prove that the band reduction scheme applied to $\Re(D)$ and
  $U_D \colon = [ U , DU ]$ computes a unitary matrix $Q_1$ such that
  $F\colon =Q_1D Q_1^H$ is $2k$-banded in staircase form.  A suitable
  reblocking of $F$ in a block tridiagonal form puts in evidence
  analogies and generalizations of this form with the classical CMV
  format of unitary matrices \cite{BGE,killip2007cmv,cantero2003five},
  and this is why we refer to $F$ as a block CMV--type matrix.  An
  application of CMV matrices for polynomial rootfinding is given in
  \cite{BDCG}. The block CMV structure plays a fundamental role for
  reducing the overall cost of the reduction process.  To this aim we
  first introduce a numerically stable algorithm for computing $F$ in
  factored form using a suitable generalization of the classical Schur
  parametrization of scalar CMV matrices \cite{BGE}.  Then, we explain
  how bulge--chasing techniques can efficiently employed in the
  subsequent subdiagonal elimination stage applied to the cumulative
  $B$ to preserve the CMV structure of the part of the matrix that
  actually needs to be still reduced. Some experimental results are
  finally presented to illustrate the speed and the accuracy of our
  algorithms.

  The paper is organized as follows.  The real case is treated in
  Section~\ref{recomul}.  The two reduction steps applied to a real
  input matrix $D$ are discussed and analyzed in
  Subsection~\ref{sec:banded} and Subsection~\ref{sec:subdiagonal},
  respectively.  The generalization for unitary matrices is presented
  in Section~\ref{unicomul}.  In particular,
  Subsection~\ref{sec:unitary-properties} and \ref{sec:block-cmv}
  provide some general results on block CMV
  structures. Subsection~\ref{31},
  \ref{thm:block-diagonal-factorization} and \ref{32_1}
  deal with the construction of the block CMV structure in the
  reduction process whereas the computation and the condensed
  representation of the final Hessenberg matrix are the topics of
  Subsection~\ref{32} and Subsection~\ref{33}.  Experimental results
  are described in in Section~\ref{sec:numerical} followed by some
  conclusions and future work in Section~\ref{sec:conclusions}.

  \section{Reduction processes: The real plus low rank case}\label{recomul}
In this section we present a two phase   Hessenberg reduction algorithm  for a matrix 
$A = D + UV^H$ where $D$ is a real  $n \times n$ 
    diagonal matrix and  $U, V \in\mathbb{C}^{n \times k}$. 
    
\subsection{Reduction to banded form} \label{sec:banded}
      We show  how it is possible to transform the
      matrix $A$ into a Hermitian banded matrix plus a low rank 
      correction, i.e., $Q_1 A Q_1^H = D_1 + U_1 V_1^H$ 
      where
      \[
        D_1 = \begin{bmatrix}
          \times & \ldots & \times \\
          \vdots & \ddots &        & \ddots \\
          \times &        & \ddots &        & \times \\
                 & \ddots &        & \ddots & \vdots \\
                 &        & \times & \ldots & \times  \\
        \end{bmatrix}, \qquad 
        U_1 = \begin{bmatrix}
          X \\
          0
        \end{bmatrix}, \qquad X \in \mathbb{C}^{k \times k}. 
      \]
      and $D_1$ has bandwidth $k$.
The computation  can equivalently be reformulated as a band reduction problem   for 
the bordered matrix 
\[ 
E=\left[\begin{array}{c|c} 0_k & V^H\\\hline U & D\end{array}\right], 
\]
where $0_k$ denotes the zero matrix of size $k$.
 The algorithm we present  here is basically an adaptation  of 
Algorithm 2 in \cite{gragg_ammar} for solving this problem  efficiently. It  finds 
a unitary matrix $Q=I_k \oplus Q_1$ such that $Q E Q^H$ is in generalized $k$-Hessenberg form.

 The main idea behind the reduction
      is that  the matrix $D$ at the start is a (very special kind of)
      $k$-banded matrix, since it is diagonal. We then start to 
      put zeros in the matrix $U$ using Givens rotations and we
      right-multiply by the same rotations to obtain a unitary transformation. 
      This, generally, will degrade the banded structure of the matrix
      $D$. We then restore it by means of Givens rotations acting on the
      left in correspondence of the zeros of $U$. Since the rotations
      leave the zeros unchanged, this also preserves the
      partial structure of $U$. 
      
      More precisely, let $\ell(i,j)$ be the function defined as
      \[
        \begin{array}{crcl}
         \ell(i,j): & \mathbb{N}^{2} & \longrightarrow & \mathbb{N} \\
                    & (i,j)          & \longmapsto     & k (n+j-i) + j\\
        \end{array}
      \]
      
      \begin{lemma}
        The function $\ell$ is injective from the subset
        $\mathcal I \subseteq \mathbb{N}^2$ defined by
        $\mathcal I = [1, n] \times [1, k] \cap \mathbb{N}^2$.
      \end{lemma}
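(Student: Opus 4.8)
The plan is to argue directly from the defining formula, showing that the equation $\ell(i_1,j_1)=\ell(i_2,j_2)$ with $(i_1,j_1),(i_2,j_2)\in\mathcal I$ forces $(i_1,j_1)=(i_2,j_2)$. Writing out the equality and cancelling the common term $kn$ gives $k(j_1-i_1)+j_1 = k(j_2-i_2)+j_2$, which I would rearrange to
\[
  j_2-j_1 = k\bigl((j_1-i_1)-(j_2-i_2)\bigr),
\]
so that $k$ divides $j_2-j_1$; equivalently, $\ell(i,j)\equiv j \pmod k$ and the residue class already pins down $j$.

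The key step is then to invoke the range restriction on the second coordinate: since $j_1,j_2\in[1,k]$ we have $|j_2-j_1|\le k-1<k$, and the only multiple of $k$ in that range is $0$, hence $j_1=j_2$. Substituting this back into the displayed identity leaves $k(i_2-i_1)=0$, so $i_1=i_2$ as well, which completes the proof of injectivity. I do not expect any genuine obstacle here — the argument is elementary — and the only point requiring care is that the bound $|j_2-j_1|<k$ is \emph{strict}, which is precisely why the admissible interval for $j$ has exactly $k$ integer points $1,\dots,k$ and not more; this is also the property (recovering $j$ from $\ell\bmod k$ and then $i$ from the linear relation) that is actually exploited when $\ell$ is used to index the subdiagonal entries annihilated during the reduction.
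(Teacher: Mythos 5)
Your argument is correct and is essentially the same as the paper's: both derive $j_1\equiv j_2 \pmod k$ from the equality $\ell(i_1,j_1)=\ell(i_2,j_2)$, use the constraint $j_1,j_2\in[1,k]$ (so $|j_1-j_2|<k$) to conclude $j_1=j_2$, and then back-substitute to get $i_1=i_2$. No differences worth noting.
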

      
      \begin{proof}
        Assume that $\ell(i,j) = \ell(i',j')$. Then we have that
        \[
          0 = \ell(i,j) - \ell(i',j') = k( (j-j') - (i-i') ) + j - j'.
        \]
        This implies that $j \equiv j'$ modulo $k$, but since $j \in [1,k]$
        we can conclude that $j = j'$. Back substituting this in the above
        equation yields that $i = i'$, thus giving the thesis. 
      \end{proof}
      
      Since $\ell$ is injective and $\mathbb{N}$ is totally ordered
      the map $\ell$ induces an order on the set $\mathcal I$. This is
      exactly the order in which we will put the zeros in the matrix
      $U$.  Pictorially, this can be described by the following:
      \[ \scriptsize
        \begin{bmatrix}
			\times & \times & \times \\
			\times & \times & \times \\
			\times & \times & \times \\
			\times & \times & \times \\
			\times & \times & \times \\
			\times & \times & \times \\
			\times & \times & \times \\
			\times & \times & \times \\			
        \end{bmatrix} \rightarrow 
        \begin{bmatrix}
			\times & \times & \times \\
			\times & \times & \times \\
			\times & \times & \times \\
			\times & \times & \times \\
			\times & \times & \times \\
			\times & \times & \times \\
			\times & \times & \times \\
			0      & \times & \times \\			
        \end{bmatrix} \rightarrow
        \begin{bmatrix}
			\times & \times & \times \\
			\times & \times & \times \\
			\times & \times & \times \\
			\times & \times & \times \\
			\times & \times & \times \\
			\times & \times & \times \\
			0      & \times & \times \\
			0      & \times & \times \\			
        \end{bmatrix} \rightarrow
        \begin{bmatrix}
			\times & \times & \times \\
			\times & \times & \times \\
			\times & \times & \times \\
			\times & \times & \times \\
			\times & \times & \times \\
			\times & \times & \times \\
			0      & \times & \times \\
			0      & 0      & \times \\			
        \end{bmatrix} 
        \rightarrow
        \begin{bmatrix}
			\times & \times & \times \\
			\times & \times & \times \\
			\times & \times & \times \\
			\times & \times & \times \\
			\times & \times & \times \\
			0      & \times & \times \\
			0      & \times & \times \\
			0      & 0      & \times \\			
        \end{bmatrix} \rightarrow \dots
      \]
      The annihilation scheme proceeds by introducing the zeros from
      the bottom up along downwardly sloping diagonals.  Let
      $\mathcal R(A, (k,j))= G_{k} A G_k^H$, where
      $G_k=I_{k-2}\oplus \mathcal G_k \oplus I_{n-k}$ is a Givens
      rotation in the $(k-1,k)-$plane that annihilates $a_{k,j}$.  The
      band reduction algorithm is then described as follows:
  
\medskip
\noindent
\framebox{\parbox{8.0cm}{
\begin{code1}
{\bf Algorithm Band$\_$Red}\\
\ {\bf for} $i= 1 \ {\colon}\  n-1$\\
\ \ {\bf for} $j= 1\  {\colon} \ i$\\

\ \ \ $A=\mathcal R(A, \ell^{-1}(ki+j))$;  \quad \quad  \\
\ \ {\bf end}\\
\ {\bf end}
\end{code1}}}
\medskip

It is easily seen that we can perform each similarity transformation in 
$O(k)$ arithmetic  operations and, therefore, the overall  cost of the  band reduction is  
$O(n^2 k)$. 
      
\subsection{Subdiagonal elimination} \label{sec:subdiagonal} Our
algorithm for subdiagonal elimination and Hessenberg reduction
exploits the properties of bulge--chasing techniques applied to a band
matrix.  Observe that at the very beginning of this stage the input
matrix $A$ can be reblocked as a block tridiagonal matrix plus a
rank-$k$ correction located in the first block row.  At the first step
of the Hessenberg reduction we determine a unitary matrix
$\mathcal H_1\in \mathbb C^{k\times k}$ such that $\mathcal H_1 A(2:k+1, 1)$ is
a multiple of the of the first unit vector.  Then the transformed
matrix $A\colon =H_1 A H_1^H$, where
$H_1=1\oplus \mathcal H_1 \oplus I_{n-k-1}$, reveals a bulge outside
the current band which has to be chased off before proceeding with the
annihilation process.  In the next figure we illustrate the case where
$k=3$.
\[
  \begin{array}{llllllll}
    \x & \x & \x & \x & \x & \x & \x & \x   \\
  \tikzmark{left1}  \x   & \x & \x & \x & \x & \x  & \x & \x \\
     \x &  \x & \x & \x & \x & \x  & \x & \x \\
     \x \tikzmark{right1} &  \x &  \x &  \x & \x & \x  & \x & \x \\
     0 &  \x &  \x & \x & \x & \x  & \x & \x \\
     0 &  0 &  \x &  \x & \x & \x   & \x & \x \\
     0 &  0 &  0 & \x &  \x & \x  & \x & \x  \\
     0 &  0 &  0 &  0 &  \x &  \x   & \x & \x \\
  \end{array} \rightarrow 
 \begin{array}{llllllll}
    \x & \x & \x & \x & \x & \x & \x & \x   \\
  0  & \x & \x & \x & \x & \x  & \x & \x \\
     0 &  \x & \x & \x & \x & \x  & \x & \x \\
     0 &  \x &  \x &  \x & \x & \x  & \x & \x \\
     0 &   \tikzmark{left2}\z &  \z & \z & \x & \x  & \x & \x \\
     0 &  \z &  \z &  \z & \x & \x   & \x & \x \\
     0 &  \z &  \z & \z  \tikzmark{right2}&  \x & \x  & \x & \x  \\
     0 &  0 &  0 &  0 &  \x &  \x   & \x & \x \\
  \end{array}
\tikzdrawbox{1}{thick,red}
  \tikzdrawbox{2}{thick,green}
  %\tikzdrawbox[(-1pt,2pt)]{3}{thick,green}
 % \tikzdrawbox[(-4pt,2pt)]{4}{thick,green}
\]
The  crucial observation is that the bulge inherits the  rank structure of the matrix $\mathcal H_1^H$.  If we construct 
$\mathcal H_1$ 
as product of consecutive Givens rotations we find that $\mathcal H_1^H$ is  unitary lower Hessenberg and therefore  
the lower triangular part of the bulge has rank one at most.  Then the bulge can be moved down by computing its QR decomposition. 
From the rank property it follows that the $Q$ factor is again a unitary Hessenberg matrix so that the process can be 
continued until the bulge disappears. By swapping the elementary Givens  rotations employed in  both the annihilation and 
bulge--chasing step we arrive at the following algorithm. 
 
\medskip
\noindent
\framebox{\parbox{8.0cm}{
\begin{code1}
{\bf Algorithm Subd$\_$El}\\
\ {\bf for} $j= 1 \ {\colon}\  n-2$\\
\ \ {\bf for} $i= \min\{n, j + k\}\  \colon -1 \colon \ j + 2$ \quad \\

\ \ \ $A=\mathcal R(A, (i,j))$;   \\
\ \ \  {\bf for} $s= i+k\  \colon k \colon \ n$\\
\ \ \  \  $A=\mathcal R(A, (s,s-k-1))$;\\
\ \ \  {\bf end}\\
\ \ {\bf end}\\
\ {\bf end}
\end{code1}}}
\medskip

The arithmetic cost of the  algorithm is upper bounded by $\displaystyle\lceil \frac{n}{k}\rceil n k p $ where $p$ is the maximum number of 
arithmetic operation required to perform the updating $A\rightarrow A\colon = \mathcal R(A, (i,j))$  at each step  of 
the algorithm.  A data--sparse representation of the matrix $A$ is provided by the following. 

\begin{lemma} \label{lem:full-hess-recovery}
        At each step of the algorithm the matrix $A$ satisfies 
\[ 
A- A^H=UV^H-VU^H, 
\]
for suitable $U, V\in \mathbb C^{n\times k}$.
      \end{lemma}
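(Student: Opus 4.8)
The plan is to proceed by induction on the elementary steps of Algorithm \textbf{Subd\_El}, using only the fact that every operation performed is a unitary similarity and that conjugation by a unitary matrix commutes with the map $M \mapsto M - M^H$.

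\emph{Base case.} At the start of the subdiagonal elimination stage the working matrix is $B = Q_1 A Q_1^H$ with $Q_1$ unitary, as produced by the band reduction of Subsection~\ref{sec:banded}. Since $D$ is real and diagonal it is Hermitian, hence $Q_1 D Q_1^H$ is Hermitian; writing $U_1 := Q_1 U$ and $V_1 := Q_1 V$ we have $B = D_1 + U_1 V_1^H$ with $D_1 = D_1^H$ and $U_1, V_1 \in \mathbb{C}^{n\times k}$. Therefore
\[
B - B^H = (D_1 + U_1 V_1^H) - (D_1^H + V_1 U_1^H) = U_1 V_1^H - V_1 U_1^H ,
\]
which is the claimed identity. (The leading block reduction $H_1$ preceding the main loop is itself a unitary similarity, so it is subsumed by the inductive step below.)

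\emph{Inductive step.} Suppose $A - A^H = UV^H - VU^H$ holds before some elementary step, and let $A' = \mathcal R(A,(i,j)) = G A G^H$ be the matrix produced by that step, where $G$ is a unitary Givens rotation embedded in the identity. Then
\[
A' - (A')^H = G A G^H - G A^H G^H = G(A - A^H)G^H = (GU)(GV)^H - (GV)(GU)^H ,
\]
so the identity persists with the updated factors $U' := GU$ and $V' := GV$, which again belong to $\mathbb{C}^{n\times k}$ because $G$ is unitary. Since both the subdiagonal annihilation and the bulge chasing operations of \textbf{Subd\_El} are similarities of the form $\mathcal R(A,\cdot)$, the invariant is maintained throughout the algorithm.

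\emph{On the difficulty.} There is no genuine obstacle here beyond the bookkeeping in the base case: one only needs to observe that $D$ is Hermitian to begin with and that unitary conjugation preserves Hermitianity, so the non-Hermitian part of the matrix retains a factorization $UV^H - VU^H$ with only $k$-column factors. We remark that the lemma exhibits the skew-Hermitian part of $A$ as a matrix of rank at most $2k$ at every step, and this—together with the banded (plus transient bulge) structure of the Hermitian part $\tfrac12(A + A^H)$—is precisely the data-sparse representation that makes each update $A \to \mathcal R(A,(i,j))$, as well as the accompanying update of $U$ and $V$, executable in $O(k)$ operations, which is what is needed to bound the total cost of the algorithm.
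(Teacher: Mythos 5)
Your proposal is correct and follows essentially the same argument as the paper: the input matrix $D_1 + U_1V_1^H$ with $D_1$ Hermitian satisfies the identity, and each unitary similarity $A \mapsto \mathcal R(A,(i,j)) = GAG^H$ preserves it with updated factors $GU$, $GV$. You merely spell out the induction and the base case in more detail than the paper's two-line proof.
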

      
      \begin{proof}
        The input matrix $A=D_1 +U_1 V_1^H$ satisfies 
$A- A^H=U_1V_1^H-V_1U_1^H$ and the relation is maintained under the congruence transformation 
$A\rightarrow A\colon = \mathcal R(A, (i,j))$.
 \end{proof}

Let $\Phi\colon \mathbb C^{n\times n}\times \mathbb C^{n\times k}\times \mathbb C^{n\times k} \rightarrow \mathbb C^{n\times n}$ be defined as 
follows:
\[
\Phi(A, U, V)={\mbox {\rm tril}}(A) +({\mbox {\rm tril}}(A,-1))^H +({\mbox {\rm triu}}(UV^H-VU^H,1)).
\]
Based on the previous result we know that the above  map provides
an efficient parametrization of the matrix $A$, so we 
can replace the (unstructured) transformation 
$A\rightarrow A\colon = \mathcal R(A, (i,j))$ in  Algorithm  {\bf Algorithm Subd$\_$El} 
with its condensed variant 
\[
\begin{array}{lll}
({\mbox {\rm tril}}(A), U, V)\rightarrow ({\mbox {\rm tril}}(A), U, V)\colon = \mathcal R_{\Phi}(\Phi( {\mbox {\rm tril}}(A), U, V)), (i,j)), \\ \\
 \Phi(\mathcal R_{\Phi}(\Phi( {\mbox {\rm tril}}(A), U, V)), (i,j)))=\mathcal R(\Phi( {\mbox {\rm tril}}(A), U, V)), (i,j))=
\mathcal R(A, (i,j)), 
\end{array}
\]
which can be employed by updating only the generators of the data--sparse representation   at the cost of $p=O(k)$ operations
 thus yielding the overall cost of $O(n^2 k)$ ops. 
 
\section{Reduction processes: The unitary plus low rank case}\label{unicomul}

In this section we deal with an extension of the
approach to the case of unitary plus low rank matrices. 
More precisely, we consider the case where $A = D + UV^H$
with $D$ being an   $n\times n$   unitary diagonal  matrix  and  $U, V \in\mathbb{C}^{n \times k}$.

A strong motivation for the interest in this class
of matrices comes from the companion block forms. 
If $A(x) = \sum_{i = 0}^n A_i x^i$ with $A_i \in \mathbb{C}^{m \times m}$ is a matrix polynomial then
the linear pencil
\[
  L(x) = x\begin{bmatrix}
    A_n \\
    & I_m \\
    & & \ddots \\
    & & & I_m \\
  \end{bmatrix} - 
  \begin{bmatrix}
    -A_{n-1} & \ldots & -A_1 & -A_0 \\
    I_m        & && \\
    & \ddots && \\
    && I_m &  \\
  \end{bmatrix}
\]
has the same spectral structure of $A(x)$
\cite{gohberg1982matrix}. When the polynomial is monic the same holds
for the above pencil (and so the generalized eigenvalue problem
associated with it turns into a standard eigenvalue problem) and the
constant term can be written as $Z + UW^H$ where
$U = e_1 \otimes I_m$, $W^H = [ -A_{n-1}, \ \ldots, \ -A_0 -I ]$ and
$Z = Z_n \otimes I_m$ where $Z_n$ is the $n \times n$ shift matrix
that maps $e_n$ to $e_1$ and $e_i$ to $e_{i+1}$ for $i < n$. In
particular, $Z_n$ can be easily diagonalized so that we can consider
an equivalent eigenvalue problem  for the matrix  $D + \tilde U \tilde W^H$ where  $D$  is  unitary  diagonal
generated from the $n-$th roots of unity.

\subsection{Properties of unitary plus low rank matrices}
\label{sec:unitary-properties}

The results stated in the previous section mainly relied upon the 
fact that the matrix $D$ is real in order to guarantee
that, at each step of the reduction,  the transformed matrix $PDP^H$ is
Hermitian, where $P$ denotes the product of all Givens rotations  accumulated so far.
In particular,  for a real $D$ maintaining the matrix
$PDP^H$ lower banded ensures that the whole matrix
is  also upper banded (thanks to the Hermitian
structure). This is not true anymore in the unitary 
case. In fact, applying the previous algorithm
``as it is'' leads to a matrix $PDP^H$ which is  in  
generalized $k$-Hessenberg form, that is, it is
lower banded (with bandwidth $k$)
but  generally dense in its upper  triangular part. 

It is possible to prove, relying on the formalism of
quasiseparable matrices, that a structure is indeed present
in the matrix $PDP^H$, even if it is not readily available to
be exploited in numerical computations.  Let us first recall some 
preliminaries (see \cite{eidelman:book1,eidelman:book2} for a survey of properties 
of quasiseparable matrices).

\begin{definition} \label{def:quasiseparable-matrix}
  Let $A  \in \mathbb{C}^{n \times n}$ a matrix. We say
  that $A$ is $(k_l, k_u)$-quasiseparable if, for any 
  $i = 1, \ldots, n - 1$, we have
  \[
    \rk(A[1:i,i+1:n]) \leq k_u, \qquad 
    \rk(A[i+1:n,1:i]) \leq k_l, 
  \]
  where we have used the MATLAB notation to 
  identify off-diagonal submatrices of $A$. 
\end{definition}

Banded matrices with lower bandwidth $k_l$ 
and upper bandwidth $k_u$
are a subset of $(k_l,k_u)$-quasiseparable matrix. 
This is a direct
consequence of the fact that, in a banded matrix, 
each off-diagonal submatrix has all the non-zero
entries in the top-right (or bottom-left) corner, and 
its rank is bounded by the bandwidth of the large
matrix. However, the class of quasiseparable matrices enjoys some
nice properties that the one of banded matrices does not have, 
such as closure under inversion. 

Before going into the details 
we introduce a very simple but important
result that is relevant to this setting, called
the Nullity theorem \cite{gustafson1984note,FM}. 
%Here we rephrase it 
%in a simpler version that is sufficient for our needs. 

\begin{theorem}[Nullity] \label{thm:nullity} Suppose
  $A\in \mathbb C^{n\times n}$ is a non-singular matrix and let
  $\B \alpha$ and $\B \beta$ to be nonempty proper subsets of
  $\mathbb I_{n}\colon =\{1,\ldots, n\}$.  Then
\[
\rank(A^{-1}[\B \alpha; \B \beta])=\rank(A[\mathbb I_{n}\setminus \B \beta; \mathbb I_{n}\setminus \B \alpha])
 + |\B \alpha| +|\B \beta| -n, 
\]
where, as usual, $|J|$ denotes the cardinality of the set $J$. 
In particular,  if $A$ and $A^{-1}$ are  partitioned conformally as follows
  \[
    A = \begin{bmatrix}
      A_{1,1} & A_{1,2} \\
      A_{2,1} & A_{2,2} \\
    \end{bmatrix}, \qquad 
    A^{-1} = \begin{bmatrix}
      B_{1,1} & B_{1,2} \\
      B_{2,1} & B_{2,2} \\
    \end{bmatrix}
  \]
  with square diagonal blocks,  then the rank of each off-diagonal block of $A$
  matches the rank  of the corresponding block in the inverse. 
\end{theorem}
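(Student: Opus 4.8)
The plan is to reduce the statement to a two-by-two block partition and then prove that block identity by a short dimension count in $\mathbb C^{n}$. First I would invoke the invariance of the rank under permutations of rows and columns. Put $p=|\B\alpha|$ and $q=|\B\beta|$, and choose permutation matrices $\Pi_1,\Pi_2$ so that $\Pi_1 A^{-1}\Pi_2$ displays the rows indexed by $\B\alpha$ as its first $p$ rows and the columns indexed by $\B\beta$ as its first $q$ columns. Setting $C:=\Pi_2^{-1}A\Pi_1^{-1}$ one gets $C^{-1}=\Pi_1 A^{-1}\Pi_2$ (a permutation matrix satisfies $\Pi^{-1}=\Pi^{H}$), and writing
\[
  C=\begin{bmatrix} C_{1,1} & C_{1,2}\\ C_{2,1} & C_{2,2}\end{bmatrix},\qquad
  C^{-1}=\begin{bmatrix} B_{1,1} & B_{1,2}\\ B_{2,1} & B_{2,2}\end{bmatrix},
\]
with $C_{1,1}$ of size $q\times p$ (hence $B_{1,1}$ of size $p\times q$), one has: $B_{1,1}$ equals $A^{-1}[\B\alpha;\B\beta]$ up to a permutation of its rows and columns, and $C_{2,2}$ equals $A[\mathbb I_n\setminus\B\beta;\mathbb I_n\setminus\B\alpha]$ up to such a permutation. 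Since the rank is unaffected by these permutations, it suffices to prove the block identity $\rank(B_{1,1})=\rank(C_{2,2})+p+q-n$.

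For the block identity I would argue with subspaces. Let $\pi\colon\mathbb C^{n}\to\mathbb C^{p}$ be the projection onto the first $p$ coordinates, and identify $\mathbb C^{q}$ with the span of the first $q$ standard basis vectors of $\mathbb C^{n}$. For $y\in\mathbb C^{q}$ the vector $B_{1,1}\,y$ is precisely $\pi$ applied to $C^{-1}y$, so the range of $B_{1,1}$ equals $\pi(W)$, where $W:=C^{-1}\bigl(\mathbb C^{q}\bigr)$ is the image of $\mathbb C^{q}$ under the map $C^{-1}$, a subspace of dimension $q$. By rank--nullity for the restriction of $\pi$ to $W$,
\[
  \rank(B_{1,1})=\dim\pi(W)=q-\dim\bigl(W\cap\ker\pi\bigr).
\]
A vector whose first $p$ coordinates vanish lies in $W$ if and only if its image under $C$ has its last $n-q$ coordinates equal to zero, which by the block form of $C$ happens exactly when its last $n-p$ coordinates form a vector of $\ker C_{2,2}$; hence $W\cap\ker\pi$ is isomorphic to $\ker C_{2,2}$, of dimension $(n-p)-\rank(C_{2,2})$. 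Substituting gives $\rank(B_{1,1})=q-(n-p)+\rank(C_{2,2})$, which is the claimed identity, and the general statement follows.

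The final ``in particular'' assertion is just the special case $\B\alpha=\{1,\dots,r\}$, $\B\beta=\{r+1,\dots,n\}$ (and the transpose choice), for which $p+q=n$, the correction term disappears, and the off-diagonal blocks of $A$ and of $A^{-1}$ have equal rank. I do not anticipate a genuine obstacle here: the only delicate point is the bookkeeping in the first step — verifying that the permuted block $C_{2,2}$ (and not, say, $C_{1,1}$) is the complementary submatrix $A[\mathbb I_n\setminus\B\beta;\mathbb I_n\setminus\B\alpha]$, and keeping in mind that the diagonal blocks are rectangular, of sizes $q\times p$ and $(n-q)\times(n-p)$, when $|\B\alpha|\ne|\B\beta|$ — whereas the substantive content is the one-line dimension count above. (When $p=q$ and $C_{2,2}$ is invertible one could instead read off $B_{1,1}^{-1}=C_{1,1}-C_{1,2}C_{2,2}^{-1}C_{2,1}$ and argue from there, but the subspace argument has the advantage of requiring no nonsingularity of the blocks.)
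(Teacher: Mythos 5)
Your proof is correct. Note, however, that the paper does not prove this statement at all: it is quoted as a known result (the Nullity Theorem of Gustafson and of Fiedler--Markham) with a citation, and only its consequences are used later. So your contribution is a self-contained elementary argument where the paper simply defers to the literature. Your route --- permuting $\B\alpha,\B\beta$ to the leading positions and then counting dimensions via $\rank(B_{1,1})=\dim\pi(W)=q-\dim(W\cap\ker\pi)$ with $W=C^{-1}(\mathbb C^{q})$ and $W\cap\ker\pi\cong\ker C_{2,2}$ --- is essentially the standard textbook proof of the nullity theorem, and it is carried out correctly: the bookkeeping identifying $B_{1,1}$ with $A^{-1}[\B\alpha;\B\beta]$ and $C_{2,2}$ with $A[\mathbb I_n\setminus\B\beta;\mathbb I_n\setminus\B\alpha]$ checks out (with $C=\Pi_2^{-1}A\Pi_1^{-1}$ the rows of $A$ indexed by $\B\beta$ and the columns indexed by $\B\alpha$ are moved to the front, so the complementary submatrix indeed lands in the $(2,2)$ position), the computation $Cx=(C_{1,2}x_2,\,C_{2,2}x_2)$ for $x=(0,x_2)$ justifies the isomorphism $W\cap\ker\pi\cong\ker C_{2,2}$, and the ``in particular'' clause follows as you say by taking $\B\alpha,\B\beta$ complementary intervals so that $p+q=n$. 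Your closing remark is also apt: the subspace count needs no invertibility of any block and handles rectangular diagonal blocks ($|\B\alpha|\neq|\B\beta|$) uniformly, which a Schur-complement argument would not.
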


Hermitian and unitary quasiseparable matrices share an additional
property, which we call {\em rank symmetry}, which makes them
suitable to develop fast algorithms. 

\begin{theorem} \label{thm:unitary-matrices-rank-symmetry}
  Let $U$ be a unitary or Hermitian
  $n \times n$ matrix. Then 
  if $U$ is lower quasiseparable of rank $k$ it is also
  upper quasiseparable with the same rank. 
\end{theorem}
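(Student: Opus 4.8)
The plan is to reduce the statement to one symmetry fact: for a Hermitian or unitary $U$ and any split point $i \in \{1,\dots,n-1\}$, the two complementary off-diagonal blocks $U[1:i, i+1:n]$ and $U[i+1:n, 1:i]$ have equal rank. Granting this, the theorem is immediate: lower quasiseparability of rank $k$ means $\rk(U[i+1:n, 1:i]) \le k$ for every such $i$, hence $\rk(U[1:i, i+1:n]) \le k$ for every $i$, which is precisely upper quasiseparability of rank $k$.

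For the Hermitian case I would argue directly. From $U = U^H$ one has $(U)_{pq} = \overline{(U)_{qp}}$ entrywise, so the block $U[1:i, i+1:n]$ is exactly the conjugate transpose of the block $U[i+1:n, 1:i]$; conjugate transposition preserves rank, so the two ranks coincide. Note that no nonsingularity assumption is needed here, which matters because a Hermitian quasiseparable matrix need not be invertible.

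For the unitary case I would invoke the Nullity theorem. A unitary $U$ is nonsingular and satisfies $U^{-1} = U^H$, so Theorem~\ref{thm:nullity} applies with $\B\alpha = \{1,\dots,i\}$ and $\B\beta = \{i+1,\dots,n\}$. For this choice $|\B\alpha| + |\B\beta| = n$, and $\mathbb I_n \setminus \B\beta = \B\alpha$, $\mathbb I_n \setminus \B\alpha = \B\beta$, so the correction term $|\B\alpha| + |\B\beta| - n$ vanishes and the theorem yields
\[
  \rk\bigl(U^{-1}[1:i, i+1:n]\bigr) = \rk\bigl(U[1:i, i+1:n]\bigr).
\]
Combining this with $U^{-1} = U^H$ and the block-transpose identity from the Hermitian step, namely $U^{-1}[1:i, i+1:n] = \bigl(U[i+1:n, 1:i]\bigr)^H$, gives $\rk(U[1:i, i+1:n]) = \rk(U[i+1:n, 1:i])$, as required.

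I do not expect a serious obstacle: the Hermitian case is a one-line symmetry, and the unitary case is a direct application of the Nullity theorem once one observes $U^{-1} = U^H$. The only thing to be careful about is keeping the two cases separate — the block-transpose identity is what handles Hermitian matrices in full generality (including the singular ones, which the Nullity theorem does not reach), while the unitary case genuinely uses nonsingularity together with the particular index sets whose cardinalities sum to $n$.
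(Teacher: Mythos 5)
Your proposal is correct and follows essentially the same route as the paper: the Hermitian case by the block-conjugate-transpose symmetry, and the unitary case by applying the Nullity theorem together with $U^{-1}=U^H$ (you merely spell out the index-set bookkeeping that the paper leaves implicit).
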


\begin{proof}
  If $U$ is Hermitian the result is obvious since each
  off-diagonal submatrix in the lower triangular part
  corresponds to another submatrix in the upper
  triangular part.   
  In the unitary case, instead, we can rely on the 
  Nullity Theorem  that guarantees
  that each submatrix of the form $P[i+1:n,1:i]$
  has rank equal to the same submatrix in the inverse of
  $P$. Since $P^{-1} = P^H$ we have the thesis.
\end{proof}

\begin{remark} \label{rem:rank-structure}
  If $H$ is a generalized $k$-Hessenberg matrix of the form
  $A = D + UV^H$ where $D$ is unitary and 
  $U, V \in \mathbb{C}^{n \times k}$ then
  \[
    H = PAP^H = PDP^H + (PU)(PV)^H. 
  \]
Suppose that $PDP^H$ is lower banded with bandwidth $k$ and  $PU$ is upper triangular. Since
  $PDP^H$ is rank symmetric we can conclude that
  the upper quasiseparable rank of $H$ is bounded
  by $\tau_k=2k$. 
\end{remark}

\subsection{The block CMV structure}
\label{sec:block-cmv}

The results of the previous section are not
constructive nor computationally helpful (at least in this form),
since they do not provide a representation for the upper part of the
matrix $PDP^H $ and a fortiori of the cumulative matrix $H$.  In the
spirit of \cite{bevilacqua2015compression} we pursue
a different approach by showing how to induce a particular banded structure  of width  $2k$
(the
so-called CMV shape \cite{killip2007cmv,BDCG}) on the unitary
matrix $PDP^H$. In this form the quasiseparable structure of $PDP^H$,
even if not of minimal size, will be evident by the banded structure
and, therefore, easily recoverable and representable.

\begin{definition}[CMV shape]\label{def:cmvshape}
  A unitary matrix $A$ is said to be \emph{CMV structured with block size $k$}
  if there exist $k \times k$  non-singular 
  matrices $R_i$ and $L_i$, respectively upper and lower triangular,
  such that 
  \[
    A = \begin{bmatrix}
          \times & \times & L_3  \\
          R_1 & \times & \times \\
            & \times & \times & \times & L_5\\
            & R_2 & \times & \times & \times \\
            &     &        & \times & \times \\
            &     &        & R_4    & \times & \ddots \\
            &     &        &        & \ddots & \ddots \\
        \end{bmatrix}
  \]
  where the symbol $\times$ has been used to identify 
  (possibly) nonzero blocks. 
\end{definition}

In order to simplify the notation we often assume that
$n$ is a multiple
of $2k$, so the above structures fit ``exactly'' in the matrix. However,
this is not restrictive  and the theory presented here continue
to hold in greater generality. In practice, one can deal with the
more general case by allows the blocks in the bottom-right corner
of the matrix to be smaller. 

\begin{remark}\label{cmvrem}
Notice that a matrix in CMV form with blocks of size $k$ is, 
in particular, $2k$-banded. The CMV structure with blocks
of size $1$ has been proposed as a generalization of what the
tridiagonal structure is for Hermitian matrices in 
\cite{cantero2003five} and \cite{killip2007cmv}.  A further analogy between the scalar and the block case is 
derived from the Nullity Theorem.  We have for $p>0$:
\[
0=\rk(A[1:2pk, 2(p+1)k+1: n])=\rk(A[2pk+1:n, 1:2(p+1)k])-2k
\]
which gives 
\[
\rk(A[2pk+1:2(p+1)k, (2p-1)k+1:2(p+1)k])=k.
\]
Pictorially we are setting  rank  constraints  on the following blocks
\[
 A = \begin{bmatrix}
          \times & \times & L_3  \\
          R_1 & \times & \times \\
            & \tikzmark{left1}\z & \z & \times & L_5\\
            & \z & \z  \tikzmark{right1}& \times & \times \\
            &     &        & \tikzmark{left2}\z & \z \\
            &     &        & \z    & \z  \tikzmark{right2} & \ddots \\
            &     &        &        & \ddots & \ddots \\
    \end{bmatrix}
    \tikzdrawbox{2}{thick,green}
    \tikzdrawbox{1}{thick,green}
  \]
and by  similar arguments on   the corresponding blocks in the  upper triangular portion. 
\end{remark}

%\begin{lr}
%  Shall we insert a picture to identify the blocks that we are
%  selecting for the application of the Nullity theorem? Might
%  help the reader to get a visual feeling of what we are doing \ldots 
%\end{lr}

\subsection{Existence of the CMV structure}\label{31}
This section is devoted to show that the reduction process
simultaneously applied to the diagonal matrix $\Re(D)$ and the
rectangular one $U_D= [ U , DU ]$ induces a CMV structure on the matrix
$A=D + UV^H$.  Observe that the construction of a unitary $P$ such
that $ T_D:=P\Re(D)P^H$ is $2k$-banded and, moreover, $P U_D$ is upper
triangular can theoretically be analyzed in the framework of block
Krylov (Lanczos) methods for converting $\Re(D)$ in block tridiagonal
form having fixed the first block column of $P^H$.  Exact breakdowns
occur when the iteration can not be continued due to a rank drop
and some modifications are required to generate a block
tridiagonal $T_D$ with subdiagonal blocks of varying (decreasing)
sizes. Conversely, we characterize the regular case where there is no
breakdown (and no rank reduction) in the corresponding block Lanczos
process by stating the following:
\begin{definition}[No breakdown condition]
  We say that the  matrix $A\in \mathbb C^{n\times n}$ satisfies 
  the \emph{no-breakdown-condition} related to $U\in \mathbb C^{n\times p}$ and $P\in \mathbb C^{n\times n}$ unitary  if
  $P U$ is upper triangular  of full rank and $H = P A P^H=(H_{i,j})$ is  a 
  block upper Hessenberg matrix,  
$H_{i,j}\in \mathbb C^{p\times p}$,  $1\leq i,j\leq s$, 
  and $H_{j+1,j}$  with  $1\leq j\leq s-1$  are of full rank.
\end{definition}

In order to simplify our  analysis of the reduction process
simultaneously applied to the diagonal matrix $\Re(D)$ and the
rectangular one $U_D= [ U , DU ]$  we assume 
the following two conditions are satisfied:
\begin{enumerate}
\item  The matrix $\Re(D)$ satisfies the 
  no-breakdown-condition related to $U_D$ and
  the unitary matrix  $Q^H\in \mathbb C^{n\times n}$ computed at end of the band  reduction process such that
  $Q^H \Re(D) Q =T_D$ is block tridiagonal.
\item  The dimension $n$  of $\Re(D)$ is a multiple of $2k$, i.e., $n=2k\ell$.  
This requirement ensures that all the  blocks of $T_D$ have 
constant size $p=2k$.
\end{enumerate}
We notice that the requirement $(1)$ is 
not so strict as it might seem. In fact, whenever a breakdown happens or, equivalently speaking, 
we find a rank loss in the associated Lanczos scheme,  then 
the reduction process  can be continued in such a way that 
the matrix obtained at the very end  will  still have a block  CMV structure with blocks of 
possibly varying sizes. However, this would  complicate the
handling of indices to track this situation and so we will 
not cover this case explicitly.

The next results  provides our algorithm for the reduction of a unitary diagonal $D$ into a 
block  CMV shape. 

\begin{lemma} \label{lem:cmv-structure}
  Let $B_R = Q^H \Re(D) Q$ be the $2k$-banded matrix obtained
  by applying the reduction scheme to the diagonal matrix
  $\Re(D)$ and the rectangular one $[ U \ DU ]$. Assume also
  that the no-breakdown-condition of $ \Re(D)$ related to
  $[ U \ DU ]$  and $Q^H$ is satisfied. Then
  the following structures are induced on $B_I$ and $B$
  defined as follows: 
  \begin{enumerate}[(i)]
    \item $B_I = Q^H \Im(D) Q$ is $2k$-banded. 
    \item $B = Q^H D Q$ is $2k$-banded and 
     CMV  structured with blocks of size $k$. 
  \end{enumerate}
\end{lemma}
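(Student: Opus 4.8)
The plan is to derive each claimed structure from the defining relation $B_R = Q^H\Re(D)Q$ together with the fact that $Q$ was built from the band-reduction scheme applied to $\Re(D)$ and $[U\ DU]$, so that $Q^H[U\ DU]$ is upper triangular of full rank. First I would observe that $D$ and $\Re(D)$ commute (both are diagonal), hence $\Im(D)=\tfrac{1}{2i}(D-D^H)$ is a polynomial in $\Re(D)$ — in fact $D = \Re(D) + i\sqrt{I-\Re(D)^2}$ entrywise, so $\Im(D)=f(\Re(D))$ for a function $f$ defined on the spectrum. Then $B_I = Q^H\Im(D)Q = f(B_R)$, but this alone only gives bandwidth growth. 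The key mechanism for (i) is instead the no-breakdown hypothesis: since $Q^H\Re(D)Q$ is block tridiagonal with invertible subdiagonal blocks and $Q^H U_D$ is upper triangular of full rank, the block Krylov space generated by $\Re(D)$ and the first $2k$ columns of $Q^H$ fills out all of $\mathbb{C}^n$ in a staircase fashion. Crucially, $\Im(D)U_D = \Im(D)[U\ DU] = [\Im(D)U\ \Im(D)DU]$, and because $\Im(D) = \tfrac{1}{2i}(D - D^H)$, one has $\Im(D)U \in \operatorname{span}\{U, DU\}$ only after multiplying by $D^H$ — so I would instead use $D U_D = [DU\ D^2 U]$ and $D^H U_D = [D^H U\ U]$, and note $DU \in \operatorname{colspan}(U_D)$ while $D^2 U$ must be handled via $\Re(D)DU = \tfrac12(D^2 U + U)$, giving $D^2 U \in \operatorname{span}\{U_D\text{-columns and }\Re(D)\cdot(DU)\}$. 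Thus $\Im(D)$ maps the first block-Krylov subspace into the first two, which (after transformation by $Q^H$) forces $B_I$ to be $2k$-banded; this is the step I expect to be the main obstacle, because it requires carefully tracking how $D$ and $D^H$ act across the block-Krylov filtration rather than just invoking a functional-calculus bandwidth bound.

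For part (ii), once (i) is established, $B = B_R + iB_I$ is $2k$-banded since both summands are. It remains to show $B$ has the CMV \emph{shape}, i.e.\ that the subdiagonal $k\times k$ blocks in positions $(2,1),(4,3),(6,5),\dots$ (the $R_i$ pattern in Definition~\ref{def:cmvshape}) are zero below and the lower-triangular blocks $L_3, L_5,\dots$ above the diagonal have the stated triangular and rank structure. Here I would invoke Remark~\ref{rem:rank-structure}: writing $A = D + UV^H$ and $H = Q^H A Q = B + (Q^H U)(Q^H V)^H$, the matrix $Q^H U$ is upper triangular (it is the first $k$ columns of $Q^H U_D$, which is upper triangular), so the low-rank correction lives in the first $k$ rows; hence the lower-triangular part of $H$ below the first block row equals the lower-triangular part of $B$. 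Combined with Theorem~\ref{thm:unitary-matrices-rank-symmetry} (rank symmetry of the unitary $B$), the lower quasiseparable rank $\le 2k$ of $B$ forces the upper quasiseparable rank $\le 2k$, and this is exactly what makes the $2k$-band collapse to the CMV staircase: in a $2k$-banded unitary matrix, upper quasiseparable rank $\le 2k$ forces the blocks $B[2pk+1:2(p+1)k,\ 1:(2p-1)k]$ to vanish (the argument of Remark~\ref{cmvrem} run in reverse), which is precisely the zero pattern below the $R_i$ blocks.

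Finally I would pin down the triangular structure of the nonzero off-diagonal blocks. The blocks $R_i$ (the subdiagonal blocks of the $2k$-band lying on the "short" side of the staircase) are upper triangular because they are submatrices of the upper-triangular factor $Q^H U_D$-type data propagated by the Lanczos recurrence — more directly, the band-reduction algorithm applied as in Subsection~\ref{sec:banded} produces $Q^H$ such that $Q^H U_D$ is upper triangular, and the classical block-Lanczos identity $\Re(D)Q = QT_D$ shows the subdiagonal blocks of $T_D$ are the upper-triangular "normalization" factors $X$ from the QR-type steps; invertibility is the no-breakdown hypothesis. The blocks $L_{2i+1}$ being lower triangular then follows by taking adjoints: $B^H = Q^H D^H Q$ has the same band and staircase structure with the roles of sub- and super-diagonal interchanged, and unitarity $B^H = B^{-1}$ together with the Nullity Theorem transfers the triangular shape. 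I would close by remarking that in the non-generic (breakdown) case the same argument goes through with blocks of decreasing size, as already noted in the text, so the regularity assumption is only a notational convenience.
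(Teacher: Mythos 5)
Your part (i) is in the right spirit---the paper, too, establishes the bandedness of $B_I$ by tracking how $D$ and $D^H$ act along the block Krylov filtration generated by $\Re(D)$ and $U_D=[U\ DU]$, encoded there as the QR factorization $QR_U=[U,\ DU,\ D^HU,\ D^2U,\ (D^2)^HU,\dots]$ of Equation~\ref{eq:spanning-spaces}---but you only verify the base case (the first Krylov block is mapped into the first two) and leave the inductive step $D\mathcal K_j,\ D^H\mathcal K_j\subseteq \mathcal K_{j+1}$ for every $j$ unproved, which is where the actual content lies; also, the aside that $\Im(D)=f(\Re(D))$ is false in general (diagonal entries $e^{\pm i\theta}$ share the real part but not the imaginary one), though you discard that route anyway.

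The genuine gap is in part (ii). You claim that unitarity, $2k$-bandedness and quasiseparable rank symmetry force the CMV staircase (``Remark~\ref{cmvrem} run in reverse''). This inference is invalid: Remark~\ref{cmvrem} deduces rank constraints \emph{from} the CMV zero pattern via the Nullity Theorem, and the converse direction fails. Indeed, any $2k$-banded matrix automatically has lower and upper quasiseparable rank at most $2k$, so your implication would assert that \emph{every} $2k$-banded unitary matrix has the block CMV zero pattern; the unitary matrix $\left[\begin{smallmatrix}0&I_{2k}\\ I_{2k}&0\end{smallmatrix}\right]\oplus I_{n-4k}$ is $2k$-banded, yet its $(3,1)$ block in the $k\times k$ blocking equals $I_k\neq 0$, violating the pattern. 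The staircase zeros, as well as the triangularity (and nonsingularity) of the $R_i$ and $L_i$, come from the \emph{specific} Krylov structure of $Q$, not from generic band/rank considerations: the paper reads off the $i$-th block column of $B$ from $QBE_i=D\tilde U R_U^{-1}E_i$ and exploits the asymmetry that block columns of $\tilde U$ of the form $D^jU$ are pushed one block step forward by $D$ while those of the form $(D^H)^jU$ are pulled backward; this is what produces the extra zero blocks and the upper triangular $R_i$, and the same analysis applied to $(Q^HDQ)^H=Q^HD^HQ$ yields the lower triangular $L_i$. Your alternative source for the triangularity---the block Lanczos normalization factors of $T_D$---concerns the $2k\times 2k$ subdiagonal blocks of $B_R$, not the $k\times k$ blocks $R_i$ of $B$, so it does not close this gap either.
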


\begin{proof}
  We can look at the $2k$-banded structure as a characterization
  of the operator $\Re(D)$ acting on $\mathbb C^{n}$. In
  particular, it tells us that, if we see the columns of $Q$
  as a basis for $\mathbb C^n$, the $j$-th one is mapped by
  $\Re(D)$ in the subspace spanned by the first $j + 2k$ ones. 
  Since, by construction, we have that 
  \[
    Q^H [ U \ DU ] = R 
    \iff [ U \ DU ] = Q \begin{bmatrix}
      \tilde R \\
      0_{2k} \\
      \vdots \\
      0_{2k} \\
    \end{bmatrix}, \quad \tilde R \ {\rm nonsingular},
  \]
  we have that the first $2k$ columns of $Q$ are a basis for the
  space generated by the columns of $[ U \ DU ]$. Under the
  no breakdown condition we have that the $(2k+1)$-th 
  column of $Q$ will be a multiple of the part of $\Re(D) U e_1$ 
  orthogonal to $[ U \ DU ]$. Notice that $\Re(D) = \frac{1}{2} (D + D^H)$ and therefore
  the $(2k+1)$-th 
  column of $Q$ will be  also a multiple of the part of $D^H  U e_1$ 
  orthogonal to $[ U \ DU ]$. Extending this remark yields that
  $Q$ is the unitary factor  in a $QR$ factorization of  a Krylov-type  matrix $\tilde U$, that is, 
  \begin{equation} \label{eq:spanning-spaces}
    Q R_U = \underbrace{\begin{bmatrix}
      U & DU & D^H U & D^2 U & {(D^2)}^H U & D^3 U & \ldots  {(D^{\ell-1})}^H U & D^\ell U
    \end{bmatrix}}_{\tilde U}, 
  \end{equation}
with $R_U$ invertible.
   Since $\Im(D) = \frac{1}{2i} (D - D^H)$ 
  we have that $Q^H \Im(D) Q$ has  a $2k$-banded
  structure and the $j$-th column of $Q$ is mapped  by  the linear operator $\Im(D)$ 
into  the span
  of the first $j+2k$ ones. Given that
  $D = \Re(D) + i \Im(D)$ we have that also $Q^H D Q$ is
  $2k$-banded. 
  
  It remains to show that $Q^H D Q$ is in fact endowed with
  a block CMV structure. For this it is convenient to rely upon  Equation~\ref{eq:spanning-spaces}. We can
  characterize   the $i$-th block column of $B$  by using 
  \[
    B E_i = Q^H D Q E_i  \  \Rightarrow  \  Q B E_i = D \tilde U R_U^{-1} E_i, 
  \]
where we set $E_i := e_i \otimes I_k$  and $D \tilde U$ can be explicitly written as 
  \[
    D \tilde U = \begin{bmatrix}
          DU & D^2U & U & D^3 U & D^H U & D^4 U & \ldots {(D^{\ell-2})}^H U & D^{\ell+1} U
        \end{bmatrix}.
  \]
  Recall that
  $R_U^{-1}$ is upper triangular, each block column of $\tilde U R_U^{-1}$
  is a combination of the corresponding one in $\tilde U$ and the previous ones. 
  Thus we  can deduce  that: 
  \begin{itemize}
    \item The block column of $\tilde U R_U^{-1}$ corresponding to the ones
    of the form $D^j U$ in $\tilde U$ (which are in position $2j$, except
    for the first one) are mapped into a 
    linear combination of the first $2(j+1)$  block columns by 
    the action of $D$ on the left. 
    \item Since the block column of the form $(D^H)^j U$ of $\tilde U$
    are mapped in $(D^H)^{j-1} U$ the corresponding  block column in $\tilde U R_U^{-1}$
    will be mapped in a combination of the previous ones. 
  \end{itemize}
  
  The above remarks give bounds on the maximum lower band structure that are stricter 
than the ones given by being $2k$-banded, and have the following shape:
  \[
    Q^H D Q = \begin{bmatrix}
      \times & \times & \times & \times & \times \\
      R_1 & \times & \times & \times & \times \\
        & \times & \times & \times & \times\\
        & R_2 & \times & \times & \times \\
        &     &        & \times & \times \\
        &     &        & R_4    & \times 
    \end{bmatrix}, 
  \]
where $R_i$   are suitable non-singular  upper triangular matrices.
  Performing a similar analysis on ${(Q^H D Q)}^H = Q^HD^HQ$ yields a similar
  structure also in the upper part, proving our claim about the CMV
  block structure of $Q^H D Q$. 
\end{proof}

The above description is theoretically satisfying, but applying
the Lanczos process to implicitly obtain the block CMV structure
is not ideal numerically.  In the next subsection  we introduce a different factored form
of block CMV structured matrices  which is the basis of  a numerically stable algorithm
for CMV reduction presented in Subsection \ref{32_1} . 

\subsection{Factorizing CMV matrices}

Block CMV matrices have interesting properties that we will exploit in
the following to perform the reduction. We show here that they can be
factorized in block diagonal matrices with $2k \times 2k$ diagonal blocks.

\begin{theorem}
  \label{thm:block-diagonal-factorization}
  Let $A\in \mathbb C^{n\times n}$, $n=2k\ell$,  be a block CMV matrix as in Definition~\ref{def:cmvshape}.
  Then there exist two block diagonal unitary matrices $A_1$ and $A_2$
  such that $A = A_1 A_2$, which have the following form:
  \[
    A_1 =
    \begin{bmatrix}
      \times & L_{A,2} \\
      R_{A,1}& \times    \\
      && \ddots \\
      &&           & \times & L_{A,2\ell} \\
      &&           & R_{A,2\ell-1}& \times    \\
    \end{bmatrix}, \quad 
    A_2 =
    \begin{bmatrix}
      I_k \\
      &\times & L_{A,3} \\
      &R_{A,2}& \times    \\
      &&& \ddots \\
      &&&           & \times \\
    \end{bmatrix}, 
  \]
  where the matrices $R_{A,j}$ are upper triangular and
  $L_{A,j}$ are lower triangular. 
  Moreover, for any $A_1$ and $A_2$ with the above structure, the
  matrix $A$ is block CMV. 
\end{theorem}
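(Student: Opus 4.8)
I would prove the converse and the direct statement separately: the converse by a direct block multiplication, the existence of the factorization by induction on $\ell$, peeling off one $2k\times 2k$ unitary factor at a time, alternately from the left and from the right. For the converse, note first that $A=A_1A_2$ is unitary, being a product of unitaries. Indexing the $k$-blocks by $1,\dots,2\ell$, the factor $A_1$ is block diagonal with $2k\times 2k$ blocks on the index pairs $(1,2),(3,4),\dots,(2\ell-1,2\ell)$, while $A_2$ is block diagonal with $I_k$ on index $1$, with $2k\times 2k$ blocks on $(2,3),(4,5),\dots,(2\ell-2,2\ell-1)$, and with a $k\times k$ unitary on index $2\ell$. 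Then the $(i,j)$ block of $A_1A_2$ can be nonzero only when the diagonal block of $A_1$ covering block-row $i$ and the one of $A_2$ covering block-column $j$ share a $k$-block index; a short case analysis shows the admissible $(i,j)$ are exactly the nonzero positions of Definition~\ref{def:cmvshape}. Moreover each CMV corner block of $A$ is the product of precisely one triangular corner of $A_1$ and one of $A_2$ of the \emph{same} triangular type — the $(2,1)$ block is $R_{A,1}I_k$, the $(1,3)$ block is $L_{A,2}L_{A,3}$, the $(4,2)$ block is $R_{A,3}R_{A,2}$, and so on — hence triangular of the required type, and nonsingular because its factors are; so $A$ is block CMV.

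For the direct statement, the base case $\ell=1$ reads $A=\begin{bmatrix}A_{1,1}&A_{1,2}\\ R_1&A_{2,2}\end{bmatrix}$ with $R_1$ upper triangular and nonsingular. Since $A$ is unitary, the columns of $\begin{bmatrix}A_{1,2}\\ A_{2,2}\end{bmatrix}$ span the orthogonal complement of the column space of $\begin{bmatrix}A_{1,1}\\ R_1\end{bmatrix}$, and nonsingularity of $R_1$ makes that complement project isomorphically onto the first copy of $\mathbb C^k$; in particular $A_{1,2}$ is nonsingular. Taking an LQ factorization $A_{1,2}=L_{A,2}W$ with $L_{A,2}$ lower triangular and $W$ unitary, and setting $A_2=I_k\oplus W$, $A_1=AA_2^H$, one checks $A_1=\begin{bmatrix}A_{1,1}&L_{A,2}\\ R_1&A_{2,2}W^H\end{bmatrix}$, of the required shape.

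For the inductive step, let $A$ be block CMV of order $n=2k\ell$ with corner blocks $R_1=A_{2,1}$, $L_3=A_{1,3}$, $R_2=A_{4,2}$, \dots\ (notation of Definition~\ref{def:cmvshape}). Peel $\Theta_1$, the leading $2k\times 2k$ block of $A_1$, off from the left. Its first block column is forced to equal block column $1$ of $A$, which has orthonormal columns; complete it to a $2k\times 2k$ unitary $\Theta_1=\begin{bmatrix}A_{1,1}&L_{A,2}\\ R_1&\times\end{bmatrix}$. For any such completion, block column $3$ of $A$ restricted to block rows $1,2$ lies in the orthogonal complement of block column $1$ (by unitarity), hence equals $\begin{bmatrix}L_{A,2}\\ \times\end{bmatrix}$ times its coordinate matrix, which is precisely $C_{2,3}:=(\Theta_1^HA)_{2,3}$; reading the top block gives $L_3=L_{A,2}C_{2,3}$, i.e.\ $L_{A,2}=L_3C_{2,3}^{-1}$. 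So choosing the completion with $C_{2,3}$ lower triangular — possible by a QL factorization of the (fixed, nonsingular) change-of-basis matrix — forces $L_{A,2}$ lower triangular too, the inverse of a lower triangular matrix being lower triangular; this $C_{2,3}$ is exactly the corner $L_{A,3}$ of the next factor. One gets $\Theta_1^HA=I_k\oplus\tilde A$, with $\tilde A$ unitary, whose first block row $[\,C_{2,2}\ C_{2,3}\ 0\ \cdots\,]$ has orthonormal rows. Now peel $\Theta_2$ off from the right: there is a unique unitary $\Theta_2$ with first block row $[\,C_{2,2}\ C_{2,3}\,]$, and its remaining freedom — an orthonormal basis of $\ker[\,C_{2,2}\ C_{2,3}\,]$, of dimension $k$ by unitarity and nonsingularity of the corners — can be chosen so that the $(2,1)$ block $R_{A,3}$ of the matrix $\hat A$ defined by $\tilde A(\Theta_2^H\oplus I)=I_k\oplus\hat A$ is upper triangular; by the same device — now using that $\ker[\,C_{2,2}\ C_{2,3}\,]$ is the column space of $\begin{bmatrix}R_2^H\\ A_{4,3}^H\end{bmatrix}$ with $R_2$ nonsingular upper triangular — one gets $R_{A,2}=R_{A,3}^{-1}R_2$, so this choice forces $R_{A,2}$ upper triangular. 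The matrix $\hat A$ is unitary; its sparsity is the CMV pattern of order $2k(\ell-1)$ (inherited through the two peelings), and every corner block of $\hat A$ is either an unchanged corner of $A$ or the one just built, hence nonsingular triangular of the correct type — so $\hat A$ is block CMV of order $2k(\ell-1)$. By induction $\hat A=\hat A_1\hat A_2$, and reassembling,
\[
A=\Theta_1\,(I_{2k}\oplus\hat A)\,(I_k\oplus\Theta_2\oplus I)=(\Theta_1\oplus\hat A_1)\,\bigl[(I_{2k}\oplus\hat A_2)(I_k\oplus\Theta_2\oplus I)\bigr],
\]
where the first factor has the $A_1$-shape and expanding the bracket (a product of two block-diagonal matrices whose supports meet only on a block where one is the identity) gives the $A_2$-shape.

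The crux, and the step I expect to be the main obstacle, is precisely this coordination: simultaneously giving the peeled $2k\times 2k$ factor the prescribed triangular-corner form \emph{and} keeping the deflated matrix $\hat A$ block CMV (right sparsity, nonsingular triangular corners), so that the induction closes. The two identities $L_{A,2}=L_3L_{A,3}^{-1}$ and $R_{A,2}=R_{A,3}^{-1}R_2$ — tying a corner of the peeled factor to a corner of the deflated matrix via ``the inverse of a triangular matrix is triangular'' — are what make this work, together with the dimension counts for the kernels involved (consistent with, and extractable from, the rank relations in Remark~\ref{cmvrem}). Getting the block-index bookkeeping right through the alternation of left- and right-peeling is the most technical, if routine, part.
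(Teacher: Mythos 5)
Your proposal is correct, and its engine is the same as the paper's: peel off $2k\times 2k$ unitary blocks, use unitarity (equivalently the Nullity Theorem~\ref{thm:nullity}) to force the complementary blocks to vanish, and spend the leftover $k\times k$ unitary freedom in each completion to enforce triangular corners, with the rank conditions of Remark~\ref{cmvrem} guaranteeing nonsingularity. The organization differs, though. The paper runs a single left sweep: it computes QR factorizations of the corner block columns $\left[\begin{smallmatrix}A_{1,1}\\ R_1\end{smallmatrix}\right]$, $\left[\begin{smallmatrix}A_{3,2}\\ R_2\end{smallmatrix}\right],\dots$, absorbs all of them into $A_1$, and only afterwards argues that the residual factor $A_1^HA$ has the required zero pattern and lower triangular superdiagonal blocks. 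You instead induct on $\ell$, peeling alternately from the left and the right, keeping ``deflated matrix is again block CMV'' as the inductive invariant, and you obtain the triangularity of the right factor's blocks during the construction via the identities $L_{A,2}=L_3L_{A,3}^{-1}$ and $R_{A,2}=R_{A,3}^{-1}R_2$; I checked these identities and the claim that the deflated matrix inherits the CMV pattern with corners $R_{A,3},L_5,R_4,\dots$, and they hold (note in passing that your phrase ``there is a unique unitary $\Theta_2$'' contradicts the very freedom you then exploit --- the content is fine, only the wording). What each route buys: the paper's one-sided sweep is essentially the algorithm it wants to implement, with the structure of $A_2$ deduced a posteriori and rather tersely; your two-sided induction makes the propagation of nonsingular triangular corners and the self-similarity of the CMV structure under deflation completely explicit, at the price of the heavier index bookkeeping you yourself flag as the technical core.
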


\begin{proof}
  The proof that we provide is constructive, and gives an algorithm
  for the computation of $A_1$ and $A_2$. Assume that $A$ has the following
  structure:
  \[
    A =
    \begin{bmatrix}
      A_{1,1} & A_{1,2} & L_3 &  \\
      R_1    & A_{2,2} & A_{2,3} &  \\
             & \vdots & \vdots & \ddots \\
    \end{bmatrix}, 
  \]
  and let $Q_1 \left[
    \begin{smallmatrix}
      S_1 \\ 0 
    \end{smallmatrix} \right] = \left[ \begin{smallmatrix}
      A_{1,1} \\ R_1 
    \end{smallmatrix} \right]$ be a QR decomposition
  of the first block column. Then we can factor $A$ as
  \[
    \begin{bmatrix}
      Q_1^H \\
      & I \\
    \end{bmatrix} A =
    \begin{bmatrix}
      S_1 & \tilde A_{1,2} & \tilde A_{1,3} \\
          & \tilde A_{2,2} & \tilde A_{2,3} \\
    \end{bmatrix}
  \]
  Since $S_1$ is upper triangular and $A$ is unitary we must have
  that $S_1$ is diagonal and $\tilde A_{1,2} = \tilde A_{1,3} = 0$
  because of the Nullity Theorem.
  Thus, we can assume to choose $Q_1$ so that $S_1 = I$ and we obtain
  \[
  A=  \begin{bmatrix}
      Q_1 \\
      & I \\
      & & \ddots \\
      & & & I \\
    \end{bmatrix}
    \begin{bmatrix}
      I \\
      & \tilde A_{2,2} & \tilde A_{2,3} \\
      & A_{3,2}        & A_{3,3} & A_{3,4} & R_5^H \\
      & R_2 & A_{4,3} & A_{4,4} & A_{4,5} \\
      & && \vdots & & \ddots 
    \end{bmatrix}.
  \]
  Notice that, if we look at $Q_1$ as a $2 \times 2$ block
  matrix the block in position $(2,1)$ has to be upper
  triangular, and we can force the one in position $(1,2)$
  to be lower triangular (the last $k$ columns are
  only determined up to right multiplication by a $k \times k$
  unitary matrix).

  We can continue the procedure by computing a QR factorization
  \[
    Q_2 \begin{bmatrix} S_2 \\ 0 \end{bmatrix} =
    \begin{bmatrix}
      A_{3,2} \\ R_2 \\
    \end{bmatrix} \implies
    Q_2^H
    \begin{bmatrix}
      A_{3,2} & A_{3,3} \\
      R_2    & A_{4,3} \\
    \end{bmatrix} = 
    \begin{bmatrix}
      S_2 & \times \\
      0 & 0 \\
    \end{bmatrix}
  \]
  since the right-handside has rank $k$  in view of Remark \ref{cmvrem} and we assume $R_2$ to
  be of full rank. This provides a new factorization
  \[
   A= \begin{bmatrix}
      Q_1 \\
      & Q_2 \\
      & & \ddots \\
      & & & I \\
    \end{bmatrix}
    \begin{bmatrix}
      I \\
      & \tilde A_{2,2} & \tilde A_{2,3} \\
      & S_2        & \times & \\
      &  &  & \tilde A_{4,4} & \tilde A_{4,5} \\
      & && \vdots & & \ddots 
    \end{bmatrix}, 
  \]
  where the upper zero structure follows from the lower
  one by re-applying the Nullity theorem again and from the fact that the
  triangular matrices are nonsingular. 
  Iterating this procedure until the end provides
  a factorization of the form: 
  \[    
    A = \begin{bmatrix}
      \times & L_{A,2} \\
      R_{A,1}& \times    \\
      && \ddots \\
      &&           & \times & L_{A,2\ell} \\
      &&           & R_{A,2\ell - 1}& \times    \\
    \end{bmatrix} \cdot 
    \begin{bmatrix}
      I_k \\
      &\times & \times \\
      &R_{A,2}& \times    \\
      &&& \ddots \\
      &&&           & \times \\
    \end{bmatrix}
  \]
  As a last step, we observe that the superdiagonal blocks in the
  right factor need to be lower triangular because of the presence
  of the lower triangular blocks in $A$, and for the full-rank
  condition that we have imposed on the triangular
  block entries. Therefore, the
  factorization has the required form.
  The other implication can be checked by a direct computation. 
\end{proof}

\subsection{From diagonal to CMV}\label{32_1}

We are now concerned with the transformation of a diagonal plus
low-rank matrix $A = D + UV^H$, with $D$ unitary, to a CMV plus
low-rank one where the block vector $U$ is upper triangular.  We have
proved in Section~\ref{31} that this transformation can be performed
under the no-breakdown condition.  Here we are going to present a
different numerically stable approach which works without restrictions
and outputs a factored representation of the block CMV matrix with
relaxed constraints on the invertibility of the triangular blocks.

%Our main purpose is to compute a unitary transformation $Q$ such that
%$QU = R$ is upper triangular. Such a transformation is not uniquely
%defined, and we plan to use this freedom to make sure that the
%transformed matrix $QDQ^H$ has a CMV structure.

In order to perform this reduction, we shall introduce some formal
concept that will greatly ease the handling of the structure.

\begin{definition}
  We say that a unitary matrix $\mathcal Q_j\in \mathbb C^{n\times n}$ is a {\em block unitary
    transformation} with block size $k$ if there exists an integer $s$
    and a $2k \times 2k$ unitary matrix $\hat Q_j$ such that
  \[
    \mathcal Q_j =
    \begin{bmatrix}
      I_{(j-1)k} \\ & \hat Q_j & \\ && I_{sk}
    \end{bmatrix}.
  \]
  The matrix $\hat Q_j$ is called the {\em active block} of
  $\mathcal Q_j$, and the
  integer $j=\frac n k -s-1$ is used to denote its position on the block diagonal. 
\end{definition}

Informally, we can see $\mathcal Q_j$ as a block version
of Givens rotations or, more generally, of essentially $2 \times 2$
unitary matrices. We restrict ourselves to the case where
$n$ is a multiple of $2k$, since that makes the definitions and
the notation much easier to follow. 

To illustrate our interest in block unitary transformations we give
the following simple application, which will be the base of our
reduction process.

\begin{lemma}
  \label{lem:block-qr-fact}
  Let $U\in\mathbb{C}^{n\times k}$, $n=\ell k$. Then there exists a sequence
  of block unitary transformations $\mathcal Q_1, \ldots, \mathcal Q_{\ell-1}$
  such that
  \[
    \mathcal Q_1 \ldots \mathcal Q_{\ell - 1} U = R, 
  \]
  with $R$ being a $n \times k$ upper triangular matrix. 
\end{lemma}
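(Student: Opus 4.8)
The plan is to proceed by induction on $\ell$, mimicking the way a standard QR factorization is built column-block by column-block, but now using the block unitary transformations $\mathcal Q_j$ (with $2k\times 2k$ active blocks) in place of Householder reflectors. First I would partition $U$ into its $\ell$ consecutive $k\times k$ blocks, writing $U = [\,U_1^H \ U_2^H \ \cdots \ U_\ell^H\,]^H$ with $U_i\in\mathbb C^{k\times k}$. The idea is to work from the bottom up: at the first step I apply a block unitary transformation $\mathcal Q_{\ell-1}$ whose active block $\hat Q_{\ell-1}\in\mathbb C^{2k\times 2k}$ acts on the last $2k$ rows, chosen so that it maps $\left[\begin{smallmatrix} U_{\ell-1}\\ U_\ell\end{smallmatrix}\right]$ to $\left[\begin{smallmatrix} \tilde U_{\ell-1}\\ 0\end{smallmatrix}\right]$. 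Such a $\hat Q_{\ell-1}$ exists simply by taking (the conjugate transpose of) the unitary factor in a QR factorization of the $2k\times k$ matrix $\left[\begin{smallmatrix} U_{\ell-1}\\ U_\ell\end{smallmatrix}\right]$; this leaves the first $(\ell-2)k$ rows of $U$ untouched.

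Next I would iterate: having zeroed out block rows $\ell, \ell-1, \ldots, i+1$, the current matrix has a nonzero $k\times k$ block $\tilde U_i$ in block-row $i$ and zeros below it. I then apply $\mathcal Q_{i-1}$, whose active block acts on block-rows $i-1$ and $i$, to annihilate $\tilde U_i$ against $\tilde U_{i-1}$, again via a $2k\times k$ QR factorization. Crucially the block below block-row $i$ is already zero, so this transformation does not disturb the zeros already created (the active block of $\mathcal Q_{i-1}$ overlaps the zeroed region only in block-row $i$, and it sends the pair $\left[\begin{smallmatrix}\tilde U_{i-1}\\ \tilde U_i\end{smallmatrix}\right]$ to something of the form $\left[\begin{smallmatrix}*\\ 0\end{smallmatrix}\right]$). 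After the last step, applying $\mathcal Q_1$, only block-row $1$ is nonzero, i.e.\ the product $\mathcal Q_1\cdots\mathcal Q_{\ell-1}U$ has all block rows below the first equal to zero. Note the order of application: $\mathcal Q_{\ell-1}$ is applied first (innermost), and $\mathcal Q_1$ last, which matches the statement $\mathcal Q_1\cdots\mathcal Q_{\ell-1}U = R$.

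It remains to argue that the surviving top $k\times k$ block is in fact upper triangular, so that $R$ is genuinely upper triangular (and not merely supported in its first $k$ rows). This I would arrange by a final clean-up: after the bottom-up sweep produces $\left[\begin{smallmatrix}\tilde U_1\\ 0\end{smallmatrix}\right]$ with $\tilde U_1\in\mathbb C^{k\times k}$, I fold a small $k\times k$ QR factorization of $\tilde U_1$ into the active block $\hat Q_1$ of $\mathcal Q_1$ — i.e.\ replace $\hat Q_1$ by $(\mathrm{diag}(W,I_k))^H\hat Q_1$ where $W$ is the unitary factor of $\tilde U_1$; this is still a $2k\times 2k$ unitary matrix, hence still a legitimate active block, and it does not affect the zero blocks below. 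Then $R$ has its leading $k\times k$ block upper triangular and everything else zero, which is exactly the claim.

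The only mild subtlety — the closest thing to an obstacle — is bookkeeping the indices so that each $\mathcal Q_j$ really has the prescribed sparsity pattern $I_{(j-1)k}\oplus \hat Q_j\oplus I_{sk}$ with $j + s + 1 = \ell$, and checking that successive transformations act on overlapping but "nested" block windows in a way that preserves previously introduced zeros. This is entirely analogous to the standard Householder QR argument, so no genuine difficulty arises; it is just a matter of writing the window of $\mathcal Q_{i-1}$ as block-rows/columns $i-1$ and $i$ and observing that its action is the identity on block-rows $\geq i+1$, where the zeros already live.
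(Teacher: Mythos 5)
Your argument is correct: the bottom-up sweep with $2k\times 2k$ active blocks, applied innermost-first so that the product reads $\mathcal Q_1\cdots\mathcal Q_{\ell-1}U$, together with the observation that each step leaves the already-zeroed block rows untouched, is exactly the standard block-QR elimination the authors have in mind (the paper states this lemma without proof as an immediate application of the definition of block unitary transformations). The final clean-up you add is fine but not even needed if each active block is taken as the conjugate transpose of the unitary factor of a full QR of the corresponding $2k\times k$ panel, since then the surviving top block is automatically upper triangular.
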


Block unitary transformations have some useful properties, that we
will exploit to make our algorithm asymptotically fast.

\begin{lemma}
  \label{lem:core-transf-properties}
  Let $\mathcal{A}_j, \mathcal{B}_j, \mathcal{C}_j \in \mathbb C^{n\times n}$ be block
  unitary transformations of block size $k$ and $D$ a unitary diagonal
  matrix of size $n=\ell k$ for some $\ell$. Then, 
  the following properties hold:
\begin{enumerate}[(i)]
\item If $|j - i| > 1$ then
  $\mathcal A_j \mathcal A_i = \mathcal A_i \mathcal A_j$.
\item There exists a block unitary transformation
  $\mathcal E_i$ such that $\mathcal E_i = \mathcal A_i \mathcal B_i$.
\item Given the sequence $\mathcal{A}_1, \ldots, \mathcal{A}_{\ell - 1}$,
  there
  exists a modified sequence of transformations $\tilde {\mathcal A}_1, \ldots
  \tilde {\mathcal A}_{\ell - 1}$ such that
  $\tilde {\mathcal A}_1 \ldots 
  \tilde {\mathcal A}_{\ell - 1} = \mathcal A_1 \ldots \mathcal A_{\ell - 1} D$.
\item For any choice of $\mathcal A_{j}, \mathcal B_{j + 1}, \mathcal C_{j}$
  with ``V-shaped'' indices, there exist three block unitary transformations
  $\tilde{\mathcal A}_{j+1}, \tilde{\mathcal B}_j, \tilde{\mathcal C}_{j+1}$
  such that $\mathcal A_j \mathcal B_{j+1} \mathcal C_{j} =
  \tilde{\mathcal A}_{j+1} \tilde{\mathcal B}_j \tilde{\mathcal C}_{j+1}$. 
\end{enumerate}
\end{lemma}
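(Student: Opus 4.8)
The plan is to prove each of the four properties in turn, since they are essentially independent facts about the structure of block unitary transformations.

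For property (i), I would simply observe that if $|j-i|>1$ then the active blocks $\hat Q_j$ and $\hat Q_i$ occupy disjoint sets of $2k$ consecutive rows/columns that do not even touch; writing both $\mathcal A_j$ and $\mathcal A_i$ as block diagonal matrices with respect to the common refinement of the two partitions of $\{1,\dots,n\}$ into $k$-blocks, they act nontrivially on complementary index sets, so they commute. This is a one-line argument. For property (ii), the two transformations $\mathcal A_i$ and $\mathcal B_i$ have active blocks in the \emph{same} position $i$, hence $\mathcal A_i \mathcal B_i$ is again block diagonal with identity blocks everywhere except in position $i$, where the active block is the $2k\times 2k$ unitary product $\hat A_i \hat B_i$; this is by definition a block unitary transformation $\mathcal E_i$.

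Property (iii) is the "fusion with a diagonal" fact. Here I would argue inductively, pushing $D$ leftward through the product. The key local computation: since $D$ is diagonal, it splits as a direct sum of $2k\times 2k$ diagonal blocks aligned with the active block of $\mathcal A_{\ell-1}$; then $\mathcal A_{\ell-1} D = D' \mathcal A_{\ell-1}'$ where $D'$ agrees with $D$ outside the support of $\mathcal A_{\ell-1}$ and $\mathcal A_{\ell-1}' $ has active block $(\hat D_{\ell-1})^{-1}\hat A_{\ell-1}\hat D_{\ell-1}$ restricted appropriately — more precisely one absorbs the lower $k$ diagonal entries of the relevant block of $D$ into a redefined $\mathcal A_{\ell-1}$ and leaves a diagonal matrix supported on the complementary indices, which now commutes past $\mathcal A_{\ell-2},\dots$ until it reaches the block where it overlaps, and the process repeats. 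Iterating, $D$ is entirely consumed and one is left with $\tilde{\mathcal A}_1\cdots\tilde{\mathcal A}_{\ell-1}$. I would present this as: multiply $D$ into $\mathcal A_{\ell-1}$, peel off the part of $D$ acting on the first $k$ rows of that active block as a smaller diagonal matrix $D^{(1)}$ supported earlier, commute $D^{(1)}$ leftward (using (i)-type commutation with the non-overlapping $\mathcal A_j$), and recurse.

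Property (iv) is the genuine "turnover" or "shift-through" lemma and I expect it to be the main obstacle. The indices $j, j+1, j$ form a V (down-up), and we must show $\mathcal A_j \mathcal B_{j+1}\mathcal C_j = \tilde{\mathcal A}_{j+1}\tilde{\mathcal B}_j\tilde{\mathcal C}_{j+1}$, i.e. a V-shaped product equals a $\Lambda$-shaped one (up-down-up becomes down-up-down after relabeling — concretely the new indices are $j+1,j,j+1$). All six transformations have active blocks supported in rows $[(j-1)k+1,\ (j+2)k]$, a window of $3k$ rows, so the identity is really a statement about $3k\times 3k$ unitary matrices: embed $\hat A_j$ as $\hat A \oplus I_k$, $\hat B_{j+1}$ as $I_k\oplus \hat B$, $\hat C_j$ as $\hat C \oplus I_k$ inside $\mathbb C^{3k\times 3k}$, and we must factor the product $W := (\hat A\oplus I)(I\oplus\hat B)(\hat C\oplus I)$ as $(I\oplus\hat A')(\hat B'\oplus I)(I\oplus\hat C')$. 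I would do this by a QR-type argument: choose $\hat C'$ (acting on the last $2k$ coordinates) to triangularize/compress the trailing $k$ columns of $W$ appropriately so that the first factor from the left becomes block upper triangular in the $k\,|\,2k$ split, then peel off $\hat B'$ on the first $2k$ coordinates, and finally $\hat A'$ on the last $2k$; unitarity forces the leftover block to be the identity at each stage, exactly as in the proof of Theorem~\ref{thm:block-diagonal-factorization}. The bookkeeping of which coordinate blocks each factor touches, and checking the index of each resulting transformation is as claimed ($j+1$, $j$, $j+1$), is the fiddly part; I would lean on the same Nullity-theorem / triangular-times-unitary rigidity already used above to guarantee the zero blocks appear.
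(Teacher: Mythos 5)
Your proposal is correct and takes essentially the same route as the paper: the paper likewise treats (i)--(iii) as immediate from the definition and proves only (iv), by collapsing the product to a single $3k\times 3k$ unitary matrix and eliminating it with three block unitary transformations (in positions $2,1,2$) until a triangular, hence diagonal, unitary factor remains, which is then absorbed into the transformations. Your QR-type peeling with the triangular-unitary rigidity argument is the same mechanism, merely applying the factors from a slightly different side/order.
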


\begin{proof}
  All the properties can be checked easily using the definition
  of block unitary transformation. We only discuss (iv). Notice that
  the product of the block unitary transformations is a $3k \times 3k$
  unitary matrix embedded in an identity. Assume for simplicity that
  the size of the matrices is $3k$, so we do not have to
  keep track of the identities.
  
  Let $S := \mathcal A_1 \mathcal B_2
  \mathcal C_1$. We can compute its QR factorization by using
  $3$ block unitary transformations such that
  \[
    \tilde{\mathcal A}_2^H S =
    \begin{bmatrix}
      \times & \times & \times \\
      \times & \times & \times \\
      0_k    & \times & \times \\
    \end{bmatrix}, \quad 
    \tilde{\mathcal B}_1^H \tilde{\mathcal A}_2^H S =
    \begin{bmatrix}
      \times & \times & \times \\
      0_k & \times & \times \\
      0_k    & \times & \times \\
    \end{bmatrix}, \quad
    \tilde{\mathcal C}_2^H \tilde{\mathcal B}_1^H \tilde{\mathcal A}_2^H S =
    D, 
  \]
  where $D$ is a diagonal unitary matrix (since an upper triangular
  unitary matrix has to be diagonal). The diagonal term can be absorbed
  into the block unitary transformations, so we can assume without
  loss of generality that $D = I$. Thus, we have the new decomposition
  $\mathcal A_1 \mathcal B_2 \mathcal C_1 = S = \tilde {\mathcal A}_2
  \tilde {\mathcal B}_1 \tilde {\mathcal C}_2$, as desired. 
\end{proof}

A strict relationship exists between block CMV matrices
and block unitary transformations.

\begin{lemma}
  \label{lem:cmv-shape-in-core-transformations}
  A  unitary matrix $A \in \mathbb{C}^{n \times n}$, $n=\ell k$,   is CMV structured with block size $k$
 if and
  only if there exist block
  unitary transformations $\mathcal A_1, \ldots, \mathcal A_{\ell - 1}$
  such that 
  \[
    A = \begin{cases}
      \mathcal A_1 \mathcal A_3 \ldots \mathcal A_{\ell - 1}
      \mathcal A_2 \mathcal A_4 \ldots \mathcal A_{\ell - 2} &
      \text{ if } \ell \text{ is even} \\
      \mathcal A_1 \mathcal A_3 \ldots \mathcal A_{\ell - 2}
      \mathcal A_2 \mathcal A_4 \ldots \mathcal A_{\ell - 1} &
      \text{ if } \ell \text{ is odd}
    \end{cases}, 
  \]
  and all the block unitary transformations have
  active blocks of the form
  \[
    \begin{bmatrix}
      \times & L \\
      R      & \times \\
    \end{bmatrix}, \qquad
    R, L^H \text {nonsingular upper triangular}. 
  \]
\end{lemma}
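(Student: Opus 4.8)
The plan is to prove both directions by exploiting the block-diagonal factorization already established in Theorem~\ref{thm:block-diagonal-factorization}, together with the commutation and merging properties of block unitary transformations from Lemma~\ref{lem:core-transf-properties}. For the forward implication, suppose $A$ is CMV structured with block size $k$. By Theorem~\ref{thm:block-diagonal-factorization} we can write $A = A_1 A_2$ where $A_1$ and $A_2$ are block diagonal unitary matrices with $2k \times 2k$ diagonal blocks, $A_1$ having its blocks aligned at positions $1, 3, 5, \dots$ and $A_2$ having its blocks aligned at positions $2, 4, 6, \dots$ (with a leading $I_k$). Each $2k \times 2k$ diagonal block of $A_1$ sitting in rows/columns $(2j-1)k+1 : (2j+1)k$ is precisely the active block of a block unitary transformation $\mathcal A_{2j-1}$, and likewise each block of $A_2$ gives a $\mathcal A_{2j}$. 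Since the odd-indexed transformations all have mutually non-overlapping supports they commute and their product is exactly $A_1$; the same holds for the even-indexed ones and $A_2$. Collecting them in the stated order (odd indices first, then even indices, with the split between the $\ell$ even and $\ell$ odd cases merely reflecting which index is the last one present) yields the claimed factorization, and the triangularity of $L, R$ is inherited from the $L_{A,j}, R_{A,j}$ produced in Theorem~\ref{thm:block-diagonal-factorization}.

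For the converse, suppose $A = (\mathcal A_1 \mathcal A_3 \cdots)(\mathcal A_2 \mathcal A_4 \cdots)$ with active blocks of the indicated triangular-corner form. Grouping the odd-indexed factors: since $|j-i|>1$ for distinct odd indices, property~(i) of Lemma~\ref{lem:core-transf-properties} lets us regard $A_1 := \mathcal A_1 \mathcal A_3 \cdots$ as a genuine block diagonal unitary matrix with $2k\times 2k$ blocks on the odd block boundaries, and similarly $A_2 := \mathcal A_2 \mathcal A_4 \cdots$ is block diagonal on the even boundaries (with a leading $I_k$ block, and a trailing $I_k$ block when the parity works out that way). The product $A = A_1 A_2$ then has exactly the sparsity pattern and the triangular off-diagonal blocks $R_i, L_i$ required in Definition~\ref{def:cmvshape}: one simply multiplies the two block-diagonal matrices and reads off which entries can be nonzero, the lower-triangular superdiagonal blocks $L_i$ coming from the $L$-corners of the odd transformations interacting with the even ones, and the upper-triangular subdiagonal blocks $R_i$ coming from the $R$-corners. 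By the ``Moreover'' clause of Theorem~\ref{thm:block-diagonal-factorization}, any such product of two appropriately-structured block diagonal unitaries is block CMV, which closes the argument.

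The main bookkeeping obstacle — not a deep one, but the place where care is needed — is matching the parity conventions: making sure the index ranges of the odd-indexed and even-indexed transformation lists line up correctly with the block-diagonal patterns of $A_1$ and $A_2$ in both the $\ell$-even and $\ell$-odd cases, including the placement of the leading (and possibly trailing) $I_k$ blocks. I would handle this by first writing out the small cases $\ell = 3$ and $\ell = 4$ explicitly to fix the pattern, then stating the general indexing. The only genuinely structural input needed beyond this is the observation, already used inside Theorem~\ref{thm:block-diagonal-factorization} via the Nullity theorem, that forcing the subdiagonal and superdiagonal corner blocks to be (nonsingular) triangular is consistent — but since we are now allowed to quote that theorem wholesale, the present lemma reduces to the commutation/merging combinatorics of block unitary transformations.
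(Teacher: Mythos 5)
Your proposal is correct and follows essentially the same route as the paper: both directions are reduced to Theorem~\ref{thm:block-diagonal-factorization}, identifying the odd- and even-indexed (mutually commuting) block unitary transformations with the two block diagonal factors $A_1$ and $A_2$, and invoking the ``Moreover'' clause for the converse. Your version merely spells out the commutation and parity bookkeeping that the paper's proof leaves implicit.
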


\begin{proof}
  Assume that $\ell$ is even. The other case is handled in the same
  way. Let $A_1 := \mathcal A_1 \mathcal A_3 \ldots \mathcal A_{\ell - 1}$
  and $A_2 :=       \mathcal A_2 \mathcal A_4 \ldots \mathcal A_{\ell - 2}$.
  These two matrices are block diagonal and have the structure
  prescribed by Theorem~\ref{thm:block-diagonal-factorization}, so
  $A_1 A_2$ is block CMV.

  On the other hand, every CMV matrix
  can be factored as $A = A_1 A_2$, and the diagonal blocks
  of these two matrices have the structure required to be the active
  part of a block unitary transformation. Therefore,
  both $A_1$ and $A_2$ can be written as the product of
  a sequence of odd and even-indexed block unitary transformations,
  respectively. This concludes the proof. 
\end{proof}

\begin{remark}\label{cmvminus}
  If we remove the assumption about the invertibility of  the upper triangular blocks
  $R, L^H$  of the   unitary transformations $\mathcal A_j$ in the previous  lemma then  the factored
  representation  still  implies  a block CMV shape of the cumulative matrix $A$.  Indeed, these block CMV shaped 
  matrices are the ones  considered  in the  actual  reduction process (compare also with Lemma \ref{ccc}).
  \end{remark}

\begin{lemma}
  \label{lem:Vtodescending}
  Let $\mathcal A_1 , \ldots , \mathcal A_{\ell - 1} \in \mathbb C^{n\times n}$ and
  $\mathcal B_{\ell - 1}, \ldots, \mathcal B_1 \in \mathbb C^{n\times n}$, $n=\ell k$, be  two sequences of block
  unitary transformations of  block size $k$.
  Then, there exist an $n\times n$
  unitary matrix $P$ and a sequence of block unitary transformations  of  block size $k$
  $\mathcal C_1,\ldots, C_{\ell - 1}\in \mathbb C^{n\times n}$ such that
  \[
    P \mathcal A_1 \ldots \mathcal A_{\ell - 1} \mathcal B_{\ell - 1}
    \ldots \mathcal B_1 P^H = \mathcal C_1 \ldots \mathcal C_{\ell - 1}.
  \]
  Moreover, $P (e_1 \otimes I_k) = e_1 \otimes I_k$. 
\end{lemma}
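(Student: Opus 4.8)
The plan is to prove Lemma~\ref{lem:Vtodescending} by an explicit rewriting procedure, using the local moves for block unitary transformations established in Lemma~\ref{lem:core-transf-properties} as the only tools, and tracking how the ``shape'' of the two index sequences evolves under conjugation by a carefully built $P$. The starting configuration is an ascending run $\mathcal A_1 \mathcal A_2 \cdots \mathcal A_{\ell-1}$ followed immediately by a descending run $\mathcal B_{\ell-1}\cdots\mathcal B_1$; schematically the indices form a ``$\wedge$'' (up then down, meeting at the top index $\ell-1$). The target is a single ascending run $\mathcal C_1 \cdots \mathcal C_{\ell-1}$. So the combinatorial content is: conjugation by a suitable $P$ can flatten a $\wedge$ into a $/$.

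The key engine is property~(iv) of Lemma~\ref{lem:core-transf-properties}: a ``V-shaped'' triple $\mathcal A_j \mathcal B_{j+1} \mathcal C_j$ can be rewritten as $\tilde{\mathcal A}_{j+1}\tilde{\mathcal B}_j\tilde{\mathcal C}_{j+1}$, i.e. a ``$\wedge$-shaped'' (peak) triple with the peak moved up by one. Dually — and this is the move I actually need — a peak triple $\mathcal A_{j+1}\mathcal B_j \mathcal C_{j+1}$ can be converted to a valley triple $\tilde{\mathcal A}_j \tilde{\mathcal B}_{j+1}\tilde{\mathcal C}_j$ with the valley pushed down; this is just property~(iv) read from right to left (conjugating/inverting, or applying it to the reversed product). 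The idea is to use these moves, together with the commutation move~(i) and the fusion move~(ii), to ``dissolve'' the peak at the join of the two runs. First I would peel off the leftmost factor $\mathcal A_1$ of the ascending run — it is already a $\mathcal C_1$ in the making — and then work on the rest. The descending run's innermost factor $\mathcal B_{\ell-1}$ sits next to $\mathcal A_{\ell-1}$, both with the top index; I fuse adjacent equal-index factors with~(ii) wherever possible, and otherwise use the peak$\to$valley move to decrease the index of the ``bend'' and then commute the now-lower-indexed pieces leftward/rightward toward their eventual sorted positions. Iterating, the peak index descends by one each round until it reaches the bottom, at which point the product has been reorganized into a purely ascending sequence.

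For the conjugating matrix $P$: every move~(iv) as used here is a similarity by a block unitary transformation (the three transformations on each side have the same product up to the identity-absorbed diagonal, but when I \emph{move} the bend I am effectively conjugating a sub-product by one of the factors), and moves~(i),(ii) require no conjugation at all. So $P$ is built as an ordered product of the block unitary transformations that get ``pushed through'' the array during the flattening. The final required property $P(e_1\otimes I_k)=e_1\otimes I_k$ I would obtain by checking that no move ever needs a block unitary transformation with active block in the top-left position $\mathcal Q_1$: the bend starts at index $\ell-1$ and only decreases, and the leftmost factor $\mathcal A_1$ (resp. $\mathcal C_1$) is never disturbed, so every transformation entering $P$ has position $\ge 2$, hence fixes $e_1\otimes I_k$. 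If a stray $\mathcal Q_1$ does appear, one absorbs it into $\mathcal C_1$ on the right-hand side rather than into $P$.

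The main obstacle I expect is bookkeeping, not any single hard inequality: one must set up a clean invariant describing the intermediate configurations (for instance, ``ascending block $\mathcal C_1\cdots\mathcal C_{m}$, then a shrinking peak-array on indices $m+1,\dots,\ell-1$'') and verify that each application of the peak$\to$valley move together with the commutations genuinely advances this invariant and never creates an index outside $[1,\ell-1]$ or a collision that the fusion move~(ii) cannot resolve. A secondary subtlety is the parity dependence — the precise pattern of which moves fire depends on whether $\ell$ is even or odd, mirroring the case split in Lemma~\ref{lem:cmv-shape-in-core-transformations} — but both parities are handled by the same argument with only the indices relabeled, so I would present one case in detail and remark that the other is identical. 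Throughout, I would lean on Remark~\ref{cmvminus} so that I need not worry about invertibility of the triangular blocks $R,L^H$ surviving the moves.
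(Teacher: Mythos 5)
Your plan is, at bottom, the same argument the paper uses: the only tools are the commutation, fusion and turnover moves of Lemma~\ref{lem:core-transf-properties}; the ascending--then--descending product is flattened by repeatedly turning over the triple at the bend and cycling the resulting factor around the product by a conjugation; and $P(e_1\otimes I_k)=e_1\otimes I_k$ holds because every factor that ever enters $P$ has its active block at position at least $2$. The difference is purely organizational. The paper runs an induction on $\ell$: peel off $\mathcal A_1$ and $\mathcal B_1$, note that the middle product $\mathcal A_2\cdots\mathcal A_{\ell-1}\mathcal B_{\ell-1}\cdots\mathcal B_2$ equals $I_k\oplus\tilde A$, apply the inductive hypothesis to $\tilde A$ (its $\tilde P$ fixes the leading block column, so $I_k\oplus\tilde P$ commutes with $\mathcal A_1$ and $\mathcal B_1$), and then chase the single remaining descending factor $\mathcal B_1$ up the ascending run by turnovers of V-shaped triples, each followed by a conjugation at position $\geq 2$, ending with a fusion at position $\ell-1$. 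That induction is precisely the ``clean invariant'' your sketch defers to future bookkeeping: it reduces everything you are worried about to one chase of one factor, so if you adopt it your plan becomes the paper's proof.

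Two concrete soft spots in your version. First, the move you call the key engine, $(j+1,j,j+1)\to(j,j+1,j)$, is not literally ``property (iv) read from right to left'': (iv) is a one-way existence statement, and the reverse pattern needs its own (easy) QR-type argument on the $3k\times 3k$ active block. In fact you never need it: after fusing $\mathcal A_{\ell-1}\mathcal B_{\ell-1}$, the triple at the bend has indices $(m-1,m,m-1)$, which is exactly the pattern the paper calls V-shaped, so only the forward direction of (iv) is ever required; your peak/valley terminology is inverted relative to the paper's, and as stated your ``dual move'' does not apply to the bend triple. Second, the justification of the last claim via ``$\mathcal A_1$ is never disturbed'' is not quite right: $\mathcal A_1$ is consumed by a turnover with the transformed $\mathcal B_1$ and a position-$2$ factor, and this is what produces $\mathcal C_1$; the correct statement is only that no factor extracted for conjugation is ever at position $1$, which both schemes do guarantee. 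Finally, the parity of $\ell$ and the triangularity of the blocks $R,L^H$ play no role in this lemma --- the target is a single ascending run, and the even/odd regrouping and triangularization happen afterwards (Theorem~\ref{thm:diagonal-to-block-cmv} and Lemma~\ref{lem:triangularization}) --- so those worries can be dropped.
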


\begin{proof}
  We prove the result by induction on $\ell$. 
  If $\ell = 1$ there is nothing to prove, and
  if $\ell = 2$ we can choose $P = I$ and we have
  \[
    \mathcal A_1 \mathcal B_1 = \mathcal C_1
  \]
  which can be satisfied thanks to property (ii) of
  Lemma~\ref{lem:core-transf-properties}. Assume now that the
  result is valid for $\ell - 1$, and we want to prove it for
  $\ell$. We can write
  \[
    A := \mathcal A_1 \ldots \mathcal A_{\ell - 1}
    \mathcal B_{\ell - 1} \ldots \mathcal B_1 =
    \mathcal A_1 A_2 \mathcal B_1, 
  \]
  where we have set
  $A_2 := \mathcal A_{2} \ldots \mathcal A_{\ell - 1} \mathcal B_{\ell
    - 1} \ldots \mathcal B_2$. Since the first block column and block
  row of $A_2$ is equal to the identity, we can rewrite it as
  $A_2 = I_k \oplus \tilde A$, with
  $\tilde A = \tilde{\mathcal A}_{1} \ldots \tilde{\mathcal A}_{\ell -
    2} \tilde{\mathcal B}_{\ell-2} \ldots \tilde{\mathcal B}_1$.
  By the inductive hypothesis we have that there exists a unitary
  matrix $\tilde P$ so that $\tilde P \tilde A \tilde P^H =
  \tilde{\mathcal C}_1 \ldots \tilde{\mathcal C}_{\ell - 2}$.

  If we set $P_2 := I_k \oplus \tilde P$ then $P_2 \mathcal A_1 =
  \mathcal A_1$ and $P_2 \mathcal B_1^H = \mathcal B_1^H$, so we have
  \begin{equation} \label{eq:chasing-V-shape-first}
    P_2 A P_2^H = \mathcal A_1 \mathcal D_2 \ldots \mathcal D_{\ell - 1}
    \mathcal B_1, 
  \end{equation}
  where we have set $\mathcal D_{j+1} := I_k \oplus \tilde{\mathcal C}_j$.
  We can move $\mathcal B_1$ to the left since it commutes
  with all the matrices except the first two, and by setting
  $\mathcal A_1 \mathcal D_2 \mathcal B_1 =
  \hat{\mathcal B}_2 \mathcal C_1 \hat{\mathcal D}_2$,
  thanks to property (iv) of Lemma~\ref{lem:core-transf-properties},
  we get
  \[
    P_2 A P_2^H = \hat{\mathcal B}_2 \mathcal C_1 \hat{\mathcal D}_2
    \mathcal D_3 \ldots \mathcal D_{\ell - 1}
  \]
  Left-multiplying by $\hat{\mathcal B}_2^H$ and right-multiplying
  by $\hat{\mathcal B}_2$ yields:
  \[
    ( \hat{\mathcal B}_2 P_2 ) A ( \hat{\mathcal B}_2 P_2 )^H =
    \mathcal C_1 \hat{\mathcal D}_2 \mathcal D_3 \ldots \mathcal D_{\ell - 1}
    \hat{\mathcal B}_2. 
  \]
  The part of the right-handside that follows $\mathcal C_1$, that is
  the matrix
  $\hat {\mathcal D}_2 \hat {\mathcal{D}}_3 \ldots \mathcal D_{\ell-1}
  \hat {\mathcal B}_2$ has the same form of the right-handside in
  Equation~(\ref{eq:chasing-V-shape-first}), but with one term less.
  We can reuse the
  same idea $\ell -
  3$ times and obtain the desired decomposition.
\end{proof}

%\begin{lr}
%  The above proof could be made simpler by proving first the
%  $\mathcal C_1 \ldots \mathcal C_{\ell - 1} \mathcal B_1$
%  can be  brought in the desired form (it can also be proven
%  by induction on $\ell$). 
%\end{lr}

We show here the special form required by the block unitary
transformation is not very restrictive. In particular, every time
that we have a CMV-like factorization for a matrix in terms
of block unitary transformations, we can always perform a unitary
transformation to obtain the required triangular structure
inside the blocks. 

\begin{lemma}\label{ccc}
  \label{lem:triangularization}
  Let $A\in \mathbb C^{n\times n}$, $n=\ell k$,  a unitary matrix that can be factored as
  \[
      A = \begin{cases}
      \mathcal A_1 \mathcal A_3 \ldots \mathcal A_{\ell - 1}
      \mathcal A_2 \mathcal A_4 \ldots \mathcal A_{\ell - 2} &
      \text{ if } \ell \text{ is even} \\
      \mathcal A_1 \mathcal A_3 \ldots \mathcal A_{\ell - 2}
      \mathcal A_2 \mathcal A_4 \ldots \mathcal A_{\ell - 1} &
      \text{ if } \ell \text{ is odd}
    \end{cases}, 
  \]
  with $\mathcal A_j\in \mathbb C^{n\times n}$ block unitary transformation of  block size $k$.
  Then, there exist a unitary transformation $P$, which is the direct
  sum of $\ell$ unitary blocks of size $k \times k$, such that $PAP^H =\tilde A$
  satisfies
  \[
      \tilde A = \begin{cases}
    \tilde  {\mathcal A}_1 \tilde{ \mathcal A}_3 \ldots \tilde {\mathcal A}_{\ell - 1}
      \tilde {\mathcal A}_2 \tilde {\mathcal A}_4 \ldots \tilde {\mathcal A}_{\ell - 2} &
      \text{ if } \ell \text{ is even} \\
      \tilde {\mathcal A}_1 \tilde {\mathcal A}_3 \ldots \tilde {\mathcal A}_{\ell - 2}
      \tilde {\mathcal A}_2 \tilde {\mathcal A}_4 \ldots \tilde {\mathcal A}_{\ell - 1} &
      \text{ if } \ell \text{ is odd}
    \end{cases}, 
      \]
      where the active blocks of $\tilde{\mathcal A}_j$
      are of the form
      \[
    \begin{bmatrix}
      \times & L \\
      R      & \times \\
    \end{bmatrix}, \qquad
    R, L^H \text { upper triangular}. 
  \]
\end{lemma}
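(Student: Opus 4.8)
\emph{Proof plan.} The idea is to reduce the statement to an application of Theorem~\ref{thm:block-diagonal-factorization}, after first bringing $A$ into the right shape by a block–diagonal unitary congruence. The starting point is the observation that, by the computation behind the ``if'' direction of Lemma~\ref{lem:cmv-shape-in-core-transformations} together with Remark~\ref{cmvminus} (i.e.\ without assuming invertibility of the triangular blocks), the hypothesis already forces $A$ to have the block CMV \emph{sparsity} pattern of Definition~\ref{def:cmvshape}; what is missing is only the triangular structure of the off–diagonal corner blocks, and this is precisely what the congruence $P$ must install.

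Write $P = P_1 \oplus \dots \oplus P_\ell$ with $P_j \in \mathbb C^{k\times k}$ unitary. Such a congruence preserves the CMV sparsity pattern, so $PAP^H$ is CMV–shaped for every choice of the $P_j$'s, and it acts on the corner blocks of $A$ — the sub–diagonal ones, which must become upper triangular, and the super–diagonal ones, which must become lower triangular — by $R_i \mapsto P_r\,R_i\,P_c^H$ and $L_i \mapsto P_r\,L_i\,P_c^H$ for the appropriate indices $r,c$ along the two ``staircases''. I would then build the $P_j$'s by a two–pass QR--type sweep. In the first pass I fix $P_1$ (say $P_1 = I$) and, following the staircase of sub–diagonal corners, I pick each new $P_j$, once the previous ones are frozen, as the unitary factor of a QR factorization annihilating the part of the corresponding conjugated corner block lying strictly below its diagonal; this makes every $R_i$ of $PAP^H$ upper triangular. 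In the second pass I keep $P_1$ and sweep the complementary staircase of super–diagonal corners, again choosing each remaining $P_j$ by a QR factorization so that the conjugated $L_i$ becomes lower triangular. The step that makes this consistent is that the indices touched by the two passes form two paths sharing only the index $1$ (the even–indexed $P_j$'s, together with $P_1$, carry the $R_i$'s, while the odd–indexed $P_j$'s — which include $P_1$ — carry the $L_i$'s); hence no corner block is over-constrained, and since $n$ is a multiple of $2k$ this decoupling is exact.

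Once $PAP^H$ has all its sub–diagonal corner blocks upper triangular and all its super–diagonal ones lower triangular it is a block CMV matrix in the sense of Definition~\ref{def:cmvshape}, up to the possible singularity of those blocks. Applying Theorem~\ref{thm:block-diagonal-factorization} in the relaxed form of Remark~\ref{cmvminus} yields $PAP^H = \tilde A_1 \tilde A_2$ with $\tilde A_1,\tilde A_2$ block diagonal of the prescribed shape, and reading off the $2k\times 2k$ diagonal blocks gives $\tilde A_1 = \tilde{\mathcal A}_1 \tilde{\mathcal A}_3 \cdots$ and $\tilde A_2 = \tilde{\mathcal A}_2 \tilde{\mathcal A}_4 \cdots$, where each $\tilde{\mathcal A}_j$ is a block unitary transformation whose active block has the required form $\left[\begin{smallmatrix}\times & L\\ R & \times\end{smallmatrix}\right]$ with $R$ and $L^H$ upper triangular. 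This is exactly $\tilde A = PAP^H$ in the desired factored form, with the appropriate parity of indices according to whether $\ell$ is even or odd.

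The delicate points I expect are the following. First, one must check that the QR sweep does not secretly impose incompatible conditions: this is where the path structure of the corner blocks and the freedom left by each QR factorization (the ``last $k$ columns determined only up to a $k\times k$ unitary'' phenomenon already exploited in Theorem~\ref{thm:block-diagonal-factorization}) are essential, and one should verify it pass by pass. Second, and more seriously, Theorem~\ref{thm:block-diagonal-factorization} must be re-examined in the genuinely singular case: the step of its proof that deduces the super–diagonal zero pattern from the sub–diagonal one through the Nullity Theorem uses that the triangular factors are invertible. In the degenerate case this is repaired by noting that the second pass of the sweep has already forced the super–diagonal corners to be lower triangular, so the factored form can be extracted directly from the QR peeling of $PAP^H$ without appealing to that step; alternatively, for a single $2k\times 2k$ active block the two corners can always be made triangular simultaneously by a generalized (QZ) Schur decomposition of the pencil $(\,R\,,\,L^H\,)$, which one can use to seed the sweep. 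Making this last technical point airtight, and keeping the index bookkeeping straight for $\ell$ odd, is the main work.
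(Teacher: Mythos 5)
Your first two steps are sound: the given factorization does force the CMV sparsity pattern of Definition~\ref{def:cmvshape} on $A$ (without triangular corners), a congruence by $P = P_1 \oplus \cdots \oplus P_\ell$ preserves that pattern, and your two sweeps are genuinely decoupled (the $R$-corners of the product couple the indices $1,2,4,6,\dots$ and the $L$-corners the indices $1,3,5,\dots$, sharing only the frozen $P_1$), so all corner blocks of $PAP^H$ can indeed be made triangular by QR/LQ factorizations. The genuine gap is the last step. The lemma carries no invertibility hypothesis --- and, as Remark~\ref{cmvminus} stresses, it is needed precisely in the singular setting --- whereas Theorem~\ref{thm:block-diagonal-factorization} is proved only for nonsingular $R_i, L_i$: its peeling uses the rank identity of Remark~\ref{cmvrem} (itself derived from nonsingular corners further down) together with full rank of $R_2$ to conclude that the trailing $k$ columns of $\bigl[\begin{smallmatrix} A_{3,2} & A_{3,3} \\ R_2 & A_{4,3} \end{smallmatrix}\bigr]$ lie in the column space of the leading $k$, so that $Q_2^H$ annihilates them. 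When $\bigl[\begin{smallmatrix} A_{3,2} \\ R_2 \end{smallmatrix}\bigr]$ is rank deficient this inference simply fails, and knowing that the superdiagonal corners of $PAP^H$ are lower triangular does not restore it; so your first repair does not work as stated, and the QZ-seeding idea treats one $2k \times 2k$ active block but not the existence of the global, chain-coupled factorization. As written, your argument proves the lemma only under an extra genericity assumption on the corner blocks, which is exactly what the statement must avoid.

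The paper closes this by never leaving the factored form, and the cleanest repair of your plan is the same move. You are handed $A = A_l A_r$ with $A_l = \mathcal A_1 \mathcal A_3 \cdots$ and $A_r = \mathcal A_2 \mathcal A_4 \cdots$, and besides the outer congruence there is an inner gauge: $A = (A_l Z^H)(Z A_r)$ for any unitary $Z$ that is a direct sum of $k \times k$ blocks, so that $P A P^H = (P A_l Z^H)(Z A_r P^H)$ remains a product of two block diagonal factors of the prescribed shapes. Tracking which pair among $(P_j, Z_j)$ touches which corner of which factor, the triangularity constraints again split into two chains, $Z_1 \to P_2 \to Z_3 \to P_4 \to \cdots$ and $P_1 \to Z_2 \to P_3 \to Z_4 \to \cdots$ (with $Z_1 = I$), each solvable greedily by one QR or LQ factorization of a $k \times k$ block per corner --- no rank assumptions, no Nullity theorem, and no singular-case version of Theorem~\ref{thm:block-diagonal-factorization} is ever invoked; the paper's proof is exactly this, organized as a row-by-row sweep of $A_l$ with on-the-fly repairs of $A_r$. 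Note finally that triangularizing the corners of the product, as your sweep does (e.g.\ the $(4,2)$ corner of $A$ equals $[A_l]_{4,3}[A_r]_{3,2}$), is implied by, but does not imply, the triangularity of the corners of the factors, which is what the conclusion actually requires.
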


\begin{proof}
  Assume that we are in the case $\ell$ even, and let
  us denote $A = A_{l} A_r =
  ( \mathcal A_1 \mathcal A_3 \ldots \mathcal A_{\ell - 1} ) \cdot 
  ( \mathcal A_2 \mathcal A_4 \ldots \mathcal A_{\ell - 2} )$. 
  It is not restrictive to
  assume that the blocks in $A_r$ are
  already in the required form. In fact, if this is not true, we
  can compute a block diagonal unitary matrix $Q$ (with $k \times k$
  blocks) such that $Q A_r$
  has the required shape. Then, by replacing
  $A_l$ with
  $A_l Q^H$ we get another
  factorization of $A$ where the right factor is already
  in the correct form.

  We now show that we can take the left factor in the
  same shape without deteriorating the structure of the right one.
  Let $Q_1$ be a unitary transformation operating on the first block row such
  that $Q_1 A_l$ has the block in position $(1,2)$ in lower triangular
  form. Then we have
  \[
    Q_1 A_l A_r Q_1^H =     Q_1 A_l Q_1^H A_r =  A_l^{(1)} A_r 
  \]
  since $A_r$ and $Q_1$ commute. Moreover, $A_l^{(1)}$ has the
  first block row with the correct structure.

  We now compute another
  unitary transformation $Q_2$ operating on the second block row
  such that $Q_2 A_l^{(1)}$ has the second row with the correct structure.
  Now the matrix $A_r Q_2^H$ loses the triangular structure in the block
  in position $(3,2)$. However, we can compute another transformation $P_3$
  operating on the third block row that restores the structure in
  $P_3 A_r Q_2^H$, and therefore we have
  \[
    Q_2 A_l^{(1)} A_r Q_2^H = (Q_2 A_l^{(1)} P_3^H) \cdot (P_3 A_r Q_2^H) 
    = A_l^{(2)} A_r^{(2)}
  \]
  since the right multiplication by $P_3^H$ does not degrade the
  partial structure that we have in $A_l^{(1)}$. We can iterate
  this
  process until all the blocks have the required structure,
  and this proves the lemma. 
\end{proof}

In the sequel we refer to the matrix $\tilde A$  defined in the  previous lemma as
a CMV structured unitary matrix with block size $k$ even if  there is no assumption about the
invertibility of the triangular blocks. We now have all the tools required to perform the initial reduction
of $U$ to upper triangular form and of $D$ to block CMV structure. 

\begin{theorem}
  \label{thm:diagonal-to-block-cmv}
  Let $D\in \mathbb C^{n\times n}$  be  a unitary diagonal 
  matrix and $U \in \mathbb C^{n\times k}$  with $n=\ell k$ for some $\ell \in \mathbb N$. Then,
  there exists a unitary matrix $P$ such that
  $PDP^H$ is CMV  structured with block size $k$  and
  $PU(e_1 \otimes I_k)=(e_1 \otimes I_k) U_1$. 
\end{theorem}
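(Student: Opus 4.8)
The plan is to combine the block QR factorization of $U$ (Lemma~\ref{lem:block-qr-fact}) with the "V-to-descending" chasing result (Lemma~\ref{lem:Vtodescending}) and the triangularization lemma (Lemma~\ref{ccc}). First I would use Lemma~\ref{lem:block-qr-fact} to produce block unitary transformations $\mathcal Q_1, \ldots, \mathcal Q_{\ell-1}$ with $\mathcal Q_1 \cdots \mathcal Q_{\ell-1} U = R$ upper triangular; set $P_0 := \mathcal Q_1 \cdots \mathcal Q_{\ell-1}$. Applying $P_0$ as a similarity, $P_0 D P_0^H$ is no longer diagonal, but because $D$ is unitary and each $\mathcal Q_j$ is block unitary, the conjugated matrix is a product of the ascending sequence $\mathcal Q_1 \cdots \mathcal Q_{\ell-1}$ times $D$ times the descending sequence $\mathcal Q_{\ell-1}^H \cdots \mathcal Q_1^H$. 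The point is to absorb the diagonal $D$ sitting in the middle: using property (iii) of Lemma~\ref{lem:core-transf-properties} I can push $D$ through the ascending run $\mathcal Q_1 \cdots \mathcal Q_{\ell-1}$, turning it into a modified ascending run $\tilde{\mathcal Q}_1 \cdots \tilde{\mathcal Q}_{\ell-1}$ with no leftover diagonal; what remains is exactly a product of an ascending sequence of block unitary transformations followed by a descending one, i.e.\ a "V-shaped" arrangement of indices.

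Next I would invoke Lemma~\ref{lem:Vtodescending} on $\tilde{\mathcal Q}_1 \cdots \tilde{\mathcal Q}_{\ell-1} \mathcal Q_{\ell-1}^H \cdots \mathcal Q_1^H$: there is a unitary $P_1$ with $P_1(e_1 \otimes I_k) = e_1 \otimes I_k$ such that $P_1 (\text{that product}) P_1^H = \mathcal C_1 \cdots \mathcal C_{\ell-1}$, a single ascending sequence of block unitary transformations. Such an ascending product is (after the obvious commutation of non-adjacent factors, property (i)) a product of an odd-indexed block-diagonal factor and an even-indexed block-diagonal factor, hence of the CMV shape described in Lemma~\ref{lem:cmv-shape-in-core-transformations}, modulo the triangularity of the active off-diagonal blocks. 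Applying Lemma~\ref{ccc} I obtain a block-diagonal $P_2$ (direct sum of $k\times k$ unitaries) so that $P_2 \mathcal C_1 \cdots \mathcal C_{\ell-1} P_2^H$ has all active blocks with $R, L^H$ upper triangular, i.e.\ $P_2 P_1 P_0 D (P_2 P_1 P_0)^H$ is CMV structured with block size $k$. Setting $P := P_2 P_1 P_0$ finishes the structural claim on $D$.

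It remains to check $PU(e_1 \otimes I_k) = (e_1 \otimes I_k) U_1$. Here I track what each factor does to the already-upper-triangular $R = P_0 U$. The matrix $P_1$ fixes $e_1 \otimes I_k$ and hence maps the leading block of $R$ correctly; more precisely $P_1 R$ still has its nonzero content confined so that $P_1 R (e_1\otimes I_k)$ lies in the first block row, because $P_1$ is the identity on the first $k$ coordinates. Finally $P_2$ is block diagonal, so $P_2 P_1 R$ applied to $e_1 \otimes I_k$ equals $e_1 \otimes I_k$ times the $(1,1)$ block of $P_2$ acting on the leading triangular block of $R$; calling that product $U_1$ gives the claim. (If one wants $U_1$ itself upper triangular one absorbs a further $k\times k$ unitary into the first block of $P_2$, which does not disturb the CMV form since that form only constrains the off-diagonal blocks up to right multiplication by $k\times k$ unitaries, as already used in Theorem~\ref{thm:block-diagonal-factorization}.)

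The main obstacle I expect is the bookkeeping in the first paragraph: verifying that after the block QR step the conjugated $D$ really reduces to a clean "V-shaped" product with the diagonal fully absorbed, and that the index positions of the $\tilde{\mathcal Q}_j$ match the ascending pattern required by Lemma~\ref{lem:Vtodescending}. This is where properties (i)--(iv) of Lemma~\ref{lem:core-transf-properties} must be applied in exactly the right order, and where an off-by-one in the block indices would break the argument; everything downstream is a direct appeal to the lemmas already proved.
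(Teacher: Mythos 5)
Your overall scaffolding (block QR of $U$, absorbing $D$ into the descending run to get a V-shaped product, Lemma~\ref{lem:Vtodescending} to obtain a single ascending product $\mathcal C_1\cdots\mathcal C_{\ell-1}$ fixing $e_1\otimes I_k$, then Lemma~\ref{ccc} for the triangular normalization, and tracking $e_1\otimes I_k$ through everything) matches the paper's proof. But there is a genuine gap at the pivotal step: you claim that the ascending product $\mathcal C_1\mathcal C_2\cdots\mathcal C_{\ell-1}$ is, ``after the obvious commutation of non-adjacent factors,'' already a product of an odd-indexed block-diagonal factor times an even-indexed one, i.e.\ in the shape required by Lemma~\ref{lem:cmv-shape-in-core-transformations}. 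This is false: property (i) of Lemma~\ref{lem:core-transf-properties} only lets you swap $\mathcal C_i$ and $\mathcal C_j$ when $|i-j|>1$, and to separate odds from evens you would have to move, e.g., $\mathcal C_3$ past $\mathcal C_2$, which is exactly the forbidden adjacent case. Indeed an ascending product of block unitary transformations is the block analogue of a unitary Hessenberg matrix written as a descending chain of rotations --- its upper triangular part is generically full --- so it is not CMV, and no rewriting of the factorization without further similarity can make it so. Applying Lemma~\ref{ccc} at that point is also not legitimate, since that lemma presupposes a factorization already in the odd--even (CMV) ordering.

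The paper closes this gap with an additional unitary similarity that you omitted: it conjugates the ascending product by $\mathcal C_2\cdots\mathcal C_{\ell-1}$, uses that $\mathcal C_1$ commutes with everything except $\mathcal C_2$ to obtain $\mathcal C_2\mathcal C_1\mathcal C_3\cdots\mathcal C_{\ell-1}$, then conjugates by $\mathcal C_4\cdots\mathcal C_{\ell-1}$, and so on, accumulating these conjugations into a matrix $P_S$. Crucially, each conjugating factor acts only on block rows with index at least $2$, so $P_S(e_1\otimes I_k)=e_1\otimes I_k$ and the upper triangular structure of $PU$ in the first block is preserved --- which is also needed for your final claim $PU(e_1\otimes I_k)=(e_1\otimes I_k)U_1$. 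Without introducing $P_S$ (or an equivalent reordering-by-similarity argument), your proof establishes only a block Hessenberg-type factored form for the conjugated $D$, not the block CMV structure asserted in the theorem.
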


\begin{proof}
  The proof is divided in two stages. First, we show that we can
  build a unitary matrix $P_U$ such that $P_U U = R$ is upper
  triangular. Then, we use Lemma~\ref{lem:Vtodescending}
  to construct another unitary matrix $P_C=I_k \oplus \tilde P_C$ so that
  $P_C \cdot (P_U D P_U^H) \cdot P_C^H$ is in block CMV shape. We then set
  $P := P_C P_U$ and conclude the proof.

  In view of Lemma~\ref{lem:block-qr-fact} we can set
  $P_U = \mathcal A_{1} \ldots \mathcal A_{\ell - 1}$ so that $P_U U = R$.
  Applying the same transformation to $D$ yields
  \[
  P_U D P_U^H = \mathcal A_{1} \ldots \mathcal A_{\ell - 1} D \mathcal A_{\ell - 1}^H \ldots
  \mathcal A_{1}^H = \mathcal A_{1} \ldots \mathcal A_{\ell - 1} \mathcal B_{\ell - 1} \ldots \mathcal B_{1}, 
  \]
  where we have used property (ii) of Lemma~\ref{lem:core-transf-properties}
  to merge $D$ into the right block unitary transformations. We may now
  use Lemma~\ref{lem:Vtodescending} to obtain
  unitary transformations $\mathcal C_1, \ldots, \mathcal C_{\ell - 1}$
  and a unitary matrix $P_V$ 
  so that
  \[
    P_{V} \cdot (P_U D P_U^H) \cdot P_{V}^H =
    \mathcal C_1 \ldots \mathcal C_{\ell - 1}
  \]
  We now want to manipulate the above factorization to obtain one in
  CMV form, according to
  Lemma~\ref{lem:cmv-shape-in-core-transformations}. Let us
  left multiply the above by $\mathcal C_2 \ldots \mathcal C_{\ell-1}$
  and right multiply it by its inverse. We get:
  \[
    ( \mathcal C_2 \ldots \mathcal C_{\ell - 1} )
    \mathcal C_1 \ldots \mathcal C_{\ell - 1}
    ( \mathcal C_2 \ldots \mathcal C_{\ell - 1} )^H =
    \mathcal C_2 \ldots \mathcal C_{\ell - 1} \mathcal C_1. 
  \]
  Since $\mathcal C_1$ commutes with all the factors except $\mathcal C_2$,
  the above expression is equal to
  \[
    \mathcal C_2 \mathcal C_1 \mathcal C_3 \ldots \mathcal C_{\ell - 1}. 
  \]
  We can repeat the operation and move
  $\mathcal C_4 \ldots \mathcal C_{\ell - 1}$ to the left and, using the
  commutation properties, obtain
  \[
  (\mathcal C_4 \ldots \mathcal C_{\ell - 1})     \mathcal C_2 \mathcal C_1 \mathcal C_3
  \ldots \mathcal C_{\ell - 1} (\mathcal C_4 \ldots \mathcal C_{\ell - 1})^H =
  \mathcal C_4 \ldots \mathcal C_{\ell - 1} \mathcal C_2 \mathcal C_1 \mathcal C_3 = 
    \mathcal C_2 \mathcal C_4 \mathcal C_1 \mathcal C_3 \mathcal C_5 \ldots
    \mathcal C_{\ell - 1}. 
  \]
  We repeat this process until we obtain all the $\mathcal C_i$ with
  even indices on the left and the ones with odd indices
  on the right. Since this is the structure required by Lemma~\ref{lem:cmv-shape-in-core-transformations}
  and we have performed operations that
  do not involve $\mathcal C_1$, we can write this final step
  as a matrix $P_S$ with $P_S (e_1 \otimes I_k) = e_1 \otimes I_k$,
  so that
  \[
    P_S P_V P_U A ( P_S P_V P_U )^H = \mathcal C_2 \ldots \mathcal C_{\ell - 2}
    \mathcal C_1 \ldots \mathcal C_{\ell - 1}, 
  \]
  assuming $\ell$ is even. Since $P_S P_V (e_1 \otimes I_k) = e_1 \otimes I_k$
  by construction we have  that $P_S P_V P_U U = R$. We can
  compute another unitary block diagonal  transformation $P_T$ obtained through
  Lemma~\ref{lem:triangularization} to ensure the correct triangular 
  structure in the unitary blocks, and so the proof is
  complete by settings $P := P_T P_S P_V P_U$. 
  \end{proof}

We now comment on the computational cost of performing
the algorithm in the proof of Theorem~\ref{thm:diagonal-to-block-cmv}.
Assume that the matrices are of size $n$, the blocks are of size $k$,
and that $n$ is a multiple of $k$. 
The following steps are required:

\begin{enumerate}[(i)]
\item A block QR reduction of the matrix $U$. This requires
  the computation of $\mathcal O(\frac n k)$ QR factorizations
  of $2k \times 2k$ matrices. The total cost is
  $\mathcal O(\frac n k \cdot k^3) \sim \mathcal O(n k^2)$.
\item The application of Lemma~\ref{lem:Vtodescending}, which requires
  about $\mathcal O( (\frac n k)^2 )$ QR factorizations and
  matrix multiplications, all involving $2k \times 2k$ or
  $3k \times 3k$ matrices. Therefore, the total cost
  is of $\mathcal O(n^2 k)$ flops.
\item The manipulation of the order of the transformations
  requires again $\mathcal O( (\frac n k)^2 )$ operations
  on $2k \times 2k$ matrices, thus adding another
  $\mathcal O(n^2 k)$ flop count.
\item The final application of Lemma~\ref{lem:triangularization} requires
  the computation of $\mathcal O(\frac n k)$ QR factorizations,
  and thus another $\mathcal O(n k^2)$ contribution. 
\end{enumerate}

The total cost of the above steps is thus of $\mathcal O(n^2 k)$ flops, since
$k \leq n$, and  the dominant part is
given by steps (ii) and (iii).

\subsection{Preserving the CMV structure}\label{32}

In this subsection we deal with efficiently implementing the
final part of the algorithm: the reduction from block CMV form
to upper Hessenberg form.  In order to describe this part of
the algorithm we consider the factored form that we have presented
in the previous section, that is the factorization
(here reported for $\ell$ even)
\[
  A_{CMV} = \mathcal A_1 \ldots \mathcal A_{\ell-1}
  \mathcal B_{2} \ldots \mathcal B_{\ell - 2}.
\]
In the following we are interested in showing how,
using an appropriate bulge-chasing strategy, we can maintain
the sparse structure of $A_{CMV}$ during the Hessenberg reduction
procedure of $A_{CMV} + UV^H$.

Given that the block structure and the indexing inside the
blocks can be difficult to visualize, we complement the analysis
with a pictorial description of the steps, in which
the matrix has the following structure:
\[
  \begin{tikzpicture}[scale=.5]
    \node at (-2,-4) {$A_{CMV} =$};
     \draw (0,0) -- (2,0) -- (3,-1) -- (3,-2) -- (4,-2)
        -- (5,-3) -- (5,-4) -- (6,-4) -- (7,-5) -- (7,-7)
        -- (5,-7) -- (5,-6) -- (4,-6) -- (3,-5) -- (3,-4) -- (2,-4)
        -- (1,-3) -- (1,-2) -- (1,-2) -- (0,-1) -- (0,0); 
   \end{tikzpicture}
\]
Let $P_1$ be a sequence of $2k - 2$ Givens rotations that operate on
the rows with indices $2, \ldots, 2k$, starting from the bottom to the
top. These are the rotations needed to put the first column of
$A_{CMV} + UV^H$ in upper Hessenberg-form. Since we do not make any
assumption on the structure of $U$ and $V$ beyond having $U$ in upper
triangular form (the block $U_1$ in Theorem
\ref{thm:diagonal-to-block-cmv} can be absorbed in the factor $V$),
we do not restrict the analysis to the case where these rotations
create some zeros in $A_{CMV}$.

Our task is to determine a set of unitary transformations $P_2$, which
only operate on rows and columns with indices larger than $3$ (in
order to not destroy the upper Hessenberg structure created by $P_1$),
such that $P_2 P_1 A_{CMV} P_1^H P_2^H$ has the trailing
submatrix obtained removing the first row and column still
in block CMV shape.

We start by applying the first $k -1$ rotations in $P_1$ from both sides.
Applying them from the left degrades the upper triangular structure
of the block in position $(2,1)$, and makes it upper
Hessenberg. Pictorially, we obtain the following:
\[
  \begin{tikzpicture}[scale=.5]
    \node at (-2,-4) {$P_{1,1} A_{CMV} =$};
     \draw (0,0) -- (2,0) -- (3,-1) -- (3,-2) -- (4,-2)
        -- (5,-3) -- (5,-4) -- (6,-4) -- (7,-5) -- (7,-7)
        -- (5,-7) -- (5,-6) -- (4,-6) -- (3,-5) -- (3,-4) -- (2,-4)
        -- (1,-3) -- (1,-2) -- (1,-2) -- (0,-1) -- (0,0);
        \fill[gray] (0,-1.2) -- (.8,-2.0) -- (1,-2) -- (0,-1);
        % \fill[gray] (2,0) -- (2.2,0) -- (3,-0.8) -- (3,-1); 
      \end{tikzpicture}, 
\]
where $P_{1,1}$ has been used to denote the first $k - 1$ rotations
in $P_1$, and the gray area mark the fill-in introduced in the matrix.
Applying them from the right, instead, creates some bulges in the upper
triangular blocks in position $(4,2)$.  These bulges correspond with a  rank one structure
in the lower triangular part  of this block
and can be  chased away by a sequence of $k-1$ rotations. We can chase them by multiplying
by these  rotations  from the left and from the right and  then proceed in the same way
until they
reach the bottom of the matrix as follows:
\[
    \begin{tikzpicture}[scale=.5]
     \draw (0,0) -- (2,0) -- (3,-1) -- (3,-2) -- (4,-2)
        -- (5,-3) -- (5,-4) -- (6,-4) -- (7,-5) -- (7,-7)
        -- (5,-7) -- (5,-6) -- (4,-6) -- (3,-5) -- (3,-4) -- (2,-4)
        -- (1,-3) -- (1,-2) -- (1,-2) -- (0,-1) -- (0,0);
        \fill[gray] (0,-1.2) -- (.8,-2.0) -- (1,-2) -- (0,-1);
        \fill[gray] (1.4,-3.4) -- (1.6,-3.6) -- (1.4,-3.6) -- (1.4,-3.4);
        % \fill[gray] (2,0) -- (2.2,0) -- (3,-0.8) -- (3,-1);
        \node at (8,-3) {$\longrightarrow$};
      \end{tikzpicture},~\begin{tikzpicture}[scale=.5]
     \draw (0,0) -- (2,0) -- (3,-1) -- (3,-2) -- (4,-2)
        -- (5,-3) -- (5,-4) -- (6,-4) -- (7,-5) -- (7,-7)
        -- (5,-7) -- (5,-6) -- (4,-6) -- (3,-5) -- (3,-4) -- (2,-4)
        -- (1,-3) -- (1,-2) -- (1,-2) -- (0,-1) -- (0,0);
        \fill[gray] (0,-1.2) -- (.8,-2.0) -- (1,-2) -- (0,-1);
        \fill[gray] (3.4,-5.4) -- (3.6,-5.6) -- (3.4,-5.6) -- (3.4,-5.4);
        % \fill[gray] (2,0) -- (2.2,0) -- (3,-0.8) -- (3,-1);
        \node at (8,-3) {$\longrightarrow$ ~~ \ldots };
      \end{tikzpicture} 
\]

We now continue to apply the rotations from the left, and this will create
bulges in the lower triangular structure of the block $(1,3)$. Similarly,
these bulges have a rank one structure and
they 
can be chased until the bottom of the matrix, as shown in the next
picture:
\[
  \begin{tikzpicture}[scale=.5]
     \draw (0,0) -- (2,0) -- (3,-1) -- (3,-2) -- (4,-2)
        -- (5,-3) -- (5,-4) -- (6,-4) -- (7,-5) -- (7,-7)
        -- (5,-7) -- (5,-6) -- (4,-6) -- (3,-5) -- (3,-4) -- (2,-4)
        -- (1,-3) -- (1,-2) -- (1,-2) -- (0,-1) -- (0,0);
        \fill[gray] (0,-1.2) -- (.8,-2.0) -- (1,-2) -- (0,-1);
        \fill[gray] (2.5,-.5) -- (2.7,-.7) -- (2.7,-.5) -- (2.5,-.5);
        \node at (8,-3) {$\longrightarrow$};
      \end{tikzpicture}~\begin{tikzpicture}[scale=.5]
     \draw (0,0) -- (2,0) -- (3,-1) -- (3,-2) -- (4,-2)
        -- (5,-3) -- (5,-4) -- (6,-4) -- (7,-5) -- (7,-7)
        -- (5,-7) -- (5,-6) -- (4,-6) -- (3,-5) -- (3,-4) -- (2,-4)
        -- (1,-3) -- (1,-2) -- (1,-2) -- (0,-1) -- (0,0);
        \fill[gray] (0,-1.2) -- (.8,-2.0) -- (1,-2) -- (0,-1);
        \fill[gray] (4.5,-2.5) -- (4.7,-2.7) -- (4.7,-2.5) -- (4.5,-2.5);
        \node at (8,-3) {$\longrightarrow$};
      \end{tikzpicture}~\begin{tikzpicture}[scale=.5]
     \draw (0,0) -- (2,0) -- (3,-1) -- (3,-2) -- (4,-2)
        -- (5,-3) -- (5,-4) -- (6,-4) -- (7,-5) -- (7,-7)
        -- (5,-7) -- (5,-6) -- (4,-6) -- (3,-5) -- (3,-4) -- (2,-4)
        -- (1,-3) -- (1,-2) -- (1,-2) -- (0,-1) -- (0,0);
        \fill[gray] (0,-1.2) -- (.8,-2.0) -- (1,-2) -- (0,-1);
        \fill[gray] (6.5,-4.5) -- (6.7,-4.7) -- (6.7,-4.5) -- (6.5,-4.5);
        % \node at (8,-3) {$\longrightarrow$};
      \end{tikzpicture}, 
\]
and we continue the procedure until we absorb the bulge at the bottom
of the matrix. Handling the remaining rotations from the right is not
so easy. Applying them at this stage would alter
the first column of the block in position $(3,2)$ and create
a rank $1$ block in position $(3,1)$. Since we want to avoid this
fill-in we roll-up the first column of the block $(3,2)$ by using
some rotations on the left. As a consequence of the Nullity theorem,
this will automatically annihilate the diagonal entries of the lower
triangular matrix in position $(3,5)$ and, at the last step, the
first row of the block in position $(3,4)$. We perform a similar
transformation (operating only from the left) on the other lower blocks. 
We obtain the following
structure:
\[
  \begin{tikzpicture}[scale=.5]
     \draw (0,0) -- (2,0) -- (3,-1) -- (3,-2.2) -- (4,-2.2) 
        -- (4.8,-3) -- (5,-3) -- (5,-4) -- (6,-4) -- (7,-5) -- (7,-7)
        -- (5,-7) -- (5,-6) -- (4,-6) -- (3,-5) -- (3,-4) -- (2,-4)
        -- (1.2,-3) -- (1.2,-2.2) -- (1,-2.2) -- (1,-2) -- (1,-2) -- (0,-1) -- (0,0);
        \fill[gray] (0,-1.2) -- (.8,-2.0) -- (1,-2) -- (0,-1);
        % \fill[gray] (2.5,-.5) -- (2.7,-.5) -- (2.7,-.3) -- (2.5,-.3);
        \node at (8,-3) {$\longrightarrow$};
      \end{tikzpicture}~\begin{tikzpicture}[scale=.5]
     \draw (0,0) -- (2,0) -- (3,-1) -- (3,-2.2) -- (4,-2.2) 
        -- (4.8,-3) -- (5,-3) -- (5,-4.2) -- (6,-4.2) -- (6.8,-5) -- (7,-5) -- (7,-7)
        -- (5,-7) -- (5,-6) -- (4,-6) -- (3.2,-5) -- (3.2,-4.2) -- (3,-4.2) -- (3,-4) -- (2,-4)
        -- (1.2,-3) -- (1.2,-2.2) -- (1,-2.2) -- (1,-2) -- (1,-2) -- (0,-1) -- (0,0);
        \fill[gray] (0,-1.2) -- (.8,-2.0) -- (1,-2) -- (0,-1);
        % \fill[gray] (2.5,-.5) -- (2.7,-.5) -- (2.7,-.3) -- (2.5,-.3);
        \node at (8,-3) {$\longrightarrow$ ~~ \ldots };
      \end{tikzpicture}
\]
After this reduction we still have to perform some transformations
from the right. These transformations only operate on
odd-indexed blocks, and thus commute with transformations 
operating on even-indexed blocks. This will be important in the
following, since we keep these transformation ``in standby'', and we
will only apply them later.

We can now finally apply the transformations on the first block
column. The first rotation will propagate the only non-zero element
that we have left in the first column of the block $(3,2)$, and that
will extend the upper Hessenberg structure of the block in position
$(2,1)$. The following rotations will create bulges in the Hessenberg
structure, which we will chase to the bottom of the matrix.  Notice
that we are able to do so since all the rotations operate on
even-indexed blocks, thus commuting with the ones we have left ``in
standby''.

After the chasing procedure, we apply these last rotations. This creates some
fill-in, as reported in the following picture.
%Moreover, some chasing is
%necessary to remove the bulges that are created in the upper triangular matrices
%in the block subdiagonal. Since this is completely analogous to what we have
%done until now, we do not report the details.
\[
  \begin{tikzpicture}[scale=.5]
     \draw (0,0) -- (2,0) -- (3,-1) -- (3,-2.2) -- (4,-2.2) 
        -- (4.8,-3) -- (5,-3) -- (5,-4.2) -- (6,-4.2) -- (6.8,-5) -- (7,-5) -- (7,-7)
        -- (5,-7) -- (5,-6) -- (4,-6) -- (3.2,-5) -- (3.2,-4.2) -- (3,-4.2) -- (3,-4) -- (2,-4)
        -- (1.2,-3) -- (1.2,-2.2) -- (1,-2.2) -- (1,-2) -- (1,-2) -- (0,-1) -- (0,0);
        \fill[gray] (0,-1.2) -- (1,-2.2) -- (1.2,-2.2) -- (0,-1);
        \fill[gray] (2,-4) -- (3,-4) -- (3,-4.2) -- (2,-4.2) -- (2.2,-4.2) -- (2,-4);
        \fill[gray] (4,-6) -- (5,-6) -- (5,-6.2) -- (4,-6.2) -- (4.2,-6.2) -- (4,-6);
        \fill[gray] (2,0) -- (2.2,0) -- (3.2,-1) -- (3.2,-2.2) -- (3,-2.2) -- (3,-1) -- (2,0);
        \fill[gray] (4,-2.2) -- (4.2,-2.2) -- (5.2,-3.2) -- (5.2,-4.2) -- (5,-4.2) -- (5,-3.2) -- (4,-2.2);
        \fill[gray] (6,-4.2) -- (6.2,-4.2) -- (7,-5) -- (6.8,-5) -- (6,-4.2);
        % \fill[gray] (2.5,-.5) -- (2.7,-.5) -- (2.7,-.3) -- (2.5,-.3);
        % \node at (8,-3) {$\longrightarrow$ ~~ \ldots };
      \end{tikzpicture}. 
\]
We notice that the fill-in on the lower triangular matrices in the
second block superdiagonal has re-introduced the elements that were
annihilated due to the Nullity theorem, but shifted down and right
by one. In fact, a careful look at the matrix reveals that the complete
CMV structure has been shifted down and right by one entry, and thus
removing the first row and column give us a matrix with the same
structure, as desired.
We summarize the previous procedure in the following result. 

\begin{theorem}
  \label{thm:cmv-chasing}
  Let $A_{CMV}$ a matrix in block CMV form, as defined previously.
  Let $P$ a sequence of $2k - 2$ Givens rotations acting on
  the rows $2, \ldots, 2k$, starting from the bottom to the top.
  Then there exist a unitary matrix
  $Q$ such that
  \[
    \tilde A = Q P A_{CMV} P^H Q^H
  \]
  has the same block CMV structure of $A_{CMV}$ in the trailing
  matrix obtained removing the first row and the first column.
  Moreover, $Q P U$ has only the first $k + 1$ rows
  different from zero. 
\end{theorem}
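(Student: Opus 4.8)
The plan is to turn the pictorial bulge--chasing argument above into a bookkeeping of the factored representation of Lemma~\ref{lem:cmv-shape-in-core-transformations}. Write $A_{CMV}=\mathcal A_1\mathcal A_3\cdots\mathcal A_{\ell-1}\,\mathcal A_2\mathcal A_4\cdots\mathcal A_{\ell-2}$ as a product of block unitary transformations (take $\ell$ even; the odd case is identical), and split the sweep as $P=P_{1,2}P_{1,1}$, where $P_{1,1}$ collects the first $k-1$ rotations of $P$ (those supported on the second block of rows, i.e. rows $k+1,\dots,2k$) and $P_{1,2}$ the remaining $k-1$ rotations (supported on rows $2,\dots,k+1$). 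The matrix $Q$ is produced incrementally as the ordered product of all the rotations introduced to chase the bulges, and at each intermediate step we carry along a factored/structured form of the current matrix together with the rank of the ``bulge'' in each block that currently violates the CMV pattern. Throughout, every rotation that enters $Q$ acts on rows of index $\ge 4$, so it cannot spoil the leading column nor the leading Hessenberg entry once those are produced.

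The engine of the proof is a single ``create--and--chase'' move that I would state once and reuse. Applying a contiguous run of rotations from one side to a CMV block turns an $R_i$ (respectively $L_i$) block into an upper (respectively lower) Hessenberg one and, on the other side, plants a bulge in the triangular block two block--diagonals away. The key point — argued exactly as in the proof of Theorem~\ref{thm:block-diagonal-factorization} via the Nullity Theorem~\ref{thm:nullity}, using that $A_{CMV}$ is unitary and that a run of consecutive Givens rotations is essentially unitary Hessenberg — is that the off--triangular part of each such bulge has rank one (cf. the rank constraints of Remark~\ref{cmvrem}). Hence its $QR$ factorization is realized by a sequence of $k-1$ rotations; multiplying by those on the left and right annihilates the bulge and regenerates an identical rank--one bulge one block lower, and iterating pushes it off the bottom of the matrix. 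I would apply this move three times in succession: (a) the left action of $P_{1,1}$ makes the $(2,1)$ block upper Hessenberg and its right action plants a bulge in the $(4,2)$ block, chased down through $(6,4),(8,6),\dots$; (b) the remaining left action creates a rank--one bulge in the lower--triangular block $(1,3)$, chased through $(3,5),(5,7),\dots$; (c) after the roll--up step below, the left action of $P_{1,2}$ propagates a single surviving entry, extends the Hessenberg shape of block $(2,1)$, and generates rank--one bulges that are chased to the bottom. The roll--up step is the compression of the first column of block $(3,2)$ by left rotations; by the Nullity Theorem this simultaneously annihilates the diagonal of the lower--triangular block $(3,5)$ and, at the last rotation, the first row of block $(3,4)$ — and likewise for all the lower blocks $(5,4),(7,6),\dots$. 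The rotations produced by the roll--up act only on odd--indexed blocks, hence commute with everything in step (c) by Lemma~\ref{lem:core-transf-properties}(i); we keep them ``in standby'' and release them only after step (c) is complete.

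It remains to read off the final shape. Releasing the standby rotations creates exactly the fill--in depicted above: the zeros introduced by the roll--up in the second block superdiagonal reappear, but shifted one position down and to the right, and a direct inspection shows that the \emph{entire} CMV pattern has been translated by one entry along the main diagonal. Consequently, deleting the first row and column of $\tilde A=QPA_{CMV}P^HQ^H$ leaves a matrix that is again block CMV with block size $k$ (with the relaxed, possibly non--invertible, triangular blocks of Remark~\ref{cmvminus}). For the ``moreover'' statement, recall from Theorem~\ref{thm:diagonal-to-block-cmv} that $U$ is upper triangular, hence supported on rows $1,\dots,k$; the lowest rotation of $P$ that meets a nonzero row of $U$ lies in the plane $(k,k+1)$, so $PU$ is supported on rows $1,\dots,k+1$, and no rotation assembled into $Q$ couples row $k+1$ with a row of index $\ge k+2$, whence $QPU$ vanishes below row $k+1$.

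The part I expect to be delicate is the rank bookkeeping coupled with the precise effect of the roll--up: one must check, stage by stage, that every bulge created along the way has rank--one off--triangular part (so that $Q$ really is built from sequences of $k-1$ Givens rotations and the structure is preserved), and that the roll--up left rotations produce \emph{exactly} the compensating zeros dictated by Theorem~\ref{thm:nullity}, so that the trailing submatrix is block CMV and not merely $2k$--banded. Keeping the order of the two--sided transformations consistent — in particular exploiting the odd/even commutation of Lemma~\ref{lem:core-transf-properties}(i) to legitimize deferring the standby rotations — is the other point requiring care; the rest is a routine, if lengthy, elaboration of the figures above.
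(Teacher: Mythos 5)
Your proposal is correct and follows essentially the same route as the paper: the paper's own proof simply says ``follow the steps described in the previous pages,'' i.e.\ the two-sided application of $P$, the chasing of rank-one bulges by sequences of $k-1$ rotations, the Nullity-theorem roll-up with the deferred (``standby'') rotations, and the observation that the CMV pattern shifts down and right by one entry — exactly the stages you spell out, including the same argument that all rotations outside $P$ act on rows of index larger than $k+1$, which gives the claim on $QPU$. Your version is just a more explicit bookkeeping of that procedure, at the same level of rigor as the paper.
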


\begin{proof}
  Follow the steps described in the previous pages. The claim
  on the structure of $U$ after the transformation follows by
  noticing the all the transformations except the ones
  contained in $P$ operates on rows with indices larger than
  $k + 1$. 
\end{proof}

The same procedure can be applied in all the steps of Hessenberg reduction,
and the CMV structure shifts down and right of one entry at each step.
This provides a concrete procedure that allows to perform
the reduction to upper Hessenberg form in a cheap way.

We can perform a flops count for the above algorithm, to ensure
that the complexity is the one expected. First, notice that applying
a Givens rotation to the matrix $A$ has a cost of $\mathcal O(k)$ flops,
given the banded structure that we are preserving. Following the algorithm
in Theorem~\ref{thm:cmv-chasing} shows that we only need $\mathcal O(n)$
rotations to perform a single step, therefore we have 
$\mathcal O(nk)$ flops per step, which yields a total cost of
$\mathcal O(n^2k)$ operations. 

%\begin{lr}
%  Shall we make a more accurate flop count? 
%\end{lr}

However, 
whilst the algorithm allows to compute the Givens rotations needed
at each step in order to continue the process, it is not clear,
at this stage, how to recover the reduced matrix at the end of the
procedure.
This is the topic of the next section.

\begin{remark}
  Notice that, after the first step, the transformed matrix $\tilde A$ has
  the trailing part in block CMV form, but the trailing matrix is not unitary.
  Therefore, in order to apply the algorithm in the following steps (and still
  make use of the Nullity theorem), it is necessary to formally consider the
  entire matrix, and just limit the chasing on the trailing part. Also  notice that
  in the proof of Theorem~\ref{thm:cmv-chasing} we rely on the Nullity  theorem  to
  guarantee that some elements have to be zero.  This does not happen in practice due
  to floating-point arithmetic but 
  since everything here is operating on a unitary matrix
  after some steps we have $\mathrm{fl}( QA_{CMV}Q^H )=QA_{CMV}Q^H +\Delta$ 
  and the entries in the perturbation
  have to be $\approx u$ norm-wise without error  amplification. 
\end{remark}

\subsection{Representing the final matrix}\label{33}

The previous section provides  a concrete way to compute the
transformations required to take the diagonal matrix
plus low-rank correction into upper-Hessenberg form.
The computation of the transformations is possible since,
at each step, we have a structured representations of the
part of the matrix that needs to be reduced. However,
with the previous approach, we do not have an explicit
expression of the part of the matrix that has already
been reduced to upper Hessenberg form. 
The aim of this section is to show
how to represent that part.

To perform this analysis, we use the following
notation: We  denote by $P_j$ and by  $Q_j$, respectively,  the product of $2k-2$ rotations
  that cleans the elements in the $j$-th column in order to make 
  the matrix upper Hessenberg in that column and 
 the product of rotations applied to perform the subsequent  bulge chasing. Thus the matrix after  $j$ steps  is
  $Q_j P_j \ldots Q_1 P_1 (A_{CMV} + UV^H) P_1^H Q_1^H\ldots P_j^H Q_j^H$  and it will continue to  
  have a $2k$-banded structure 
  in the bottom-right part that still have to be reduced. 
  
  Let then $PA_{CMV}P^H$  be the the unitary matrix obtained at the end
  of the process, that is $P = Q_{n-2} P_{n-2} \ldots Q_1 P_1$. 
  
  \begin{lemma}
    Each row of $PA_{CMV}P^H$ admits a representation of the form
    \[
      e_j^H PA_{CMV}P^H = 
        [\B  w^H \ 0 \ \ldots \ 0 ] P_{j+1}^H \ldots P_{n-2}^H, 
    \]
    where $\B w \in \mathbb{C}^{j + 2k}$ for any $j \leq n - 2k$. 
  \end{lemma}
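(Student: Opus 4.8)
The plan is to follow row $j$ through the reduction and to induct on the number of steps already carried out. Write $C^{(m)}:=\big(Q_m P_m\cdots Q_1 P_1\big)A_{CMV}\big(Q_m P_m\cdots Q_1 P_1\big)^H$ for the matrix obtained after $m$ steps, so $C^{(0)}=A_{CMV}$ and $C^{(n-2)}=PA_{CMV}P^H$. The first, and easy, ingredient is combinatorial: the rotations composing $P_i$ act only on rows with indices in $\{i+1,\dots,i+2k-1\}$, and the rotations composing the bulge chasing $Q_i$ (roll-ups and the ``standby'' rotations of Subsection~\ref{32} included) only on rows with indices $\ge i+k+1$. Consequently, for every $i\ge j$ no \emph{left} factor of step $i$ touches row $j$, so that $e_j^H\big(Q_{n-2}P_{n-2}\cdots Q_j P_j\big)=e_j^H$ and
\[
  e_j^H PA_{CMV}P^H \;=\; e_j^H C^{(j-1)}\,P_j^H Q_j^H\,P_{j+1}^H Q_{j+1}^H\cdots P_{n-2}^H Q_{n-2}^H .
\]

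The second ingredient locates the starting point. Iterating Theorem~\ref{thm:cmv-chasing} $j-1$ times, the trailing principal submatrix $C^{(j-1)}[j:n,\,j:n]$ is again in block CMV form with block size $k$; its first scalar row is supported only on its first two block columns together with the top-left entry of the third (the block in position $(1,3)$ being lower triangular), so row $j$ of $C^{(j-1)}$ has all of its nonzero entries in columns $1,\dots,j+2k$ — the entries sitting in columns $1,\dots,j-1$, if any, are harmless. Hence $e_j^H C^{(j-1)}=[\,\B w_0^H\ 0\ \cdots\ 0\,]$ with $\B w_0\in\mathbb C^{\,j+2k}$. I would then show, by induction on $m$ from $j$ to $n-2$, that after appending the factors up to $P_m^H Q_m^H$ the row keeps the shape $[\,\B w^H\ 0\ \cdots\ 0\,]P_{j+1}^H\cdots P_m^H$ with $\B w\in\mathbb C^{\,j+2k}$. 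Setting $T_m:=P_m P_{m-1}\cdots P_{j+1}$, the inductive step amounts to checking that right-multiplying a vector supported on its first $j+2k$ entries by the conjugate $T_m^H Q_m^H T_m$ again gives such a vector; here one uses that $Q_m$ commutes with every cleaning rotation $P_\ell$ with $\ell\le m-k$ (disjoint row supports), so the interaction is localized to $Q_m$ and a band of $O(k)$ cleaning rotations, and that the net effect of $Q_m$ together with those rotations on the CMV part of the matrix is exactly the ``shift of the staircase down and to the right by one position'' established in Subsection~\ref{32} — which by construction does not reach past column $j+2k$. Taking $m=n-2$ yields the statement, and the hypothesis $j\le n-2k$ is precisely what makes column $j+2k$ meaningful.

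The step I expect to be the main obstacle is this last one: writing the bulge-chasing factor $Q_i$ out explicitly — the precise list of rotations and their planes at each step — and then verifying, index by index, that the conjugation-type products $T_i^H Q_i^H T_i$ (after the far-apart rotations have commuted through and the telescoping cancellations $P_\ell^H P_\ell=I$ have been performed) leave the length of the leading block equal to $j+2k$. The remaining ingredients — the commutation of distant rotations, the reduction to $C^{(j-1)}$, and the identification of its $j$-th row with the first row of a block CMV matrix — are routine once the supports of all the rotations are fixed.
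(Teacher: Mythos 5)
Your first two ingredients coincide with the paper's: the left factors of all steps $i\ge j$ leave row $j$ untouched, and the preservation of the (shifted) block CMV structure identifies $e_j^H C^{(j)}$ with a vector supported on the first $j+2k$ columns. The problem is the third ingredient, which you yourself flag as ``the main obstacle'' and never carry out: showing that the bulge-chasing factors $Q_{j+1},\dots,Q_{n-2}$ can be dropped from the right-hand product. That claim \emph{is} the lemma -- everything else is bookkeeping -- so as it stands the proposal is a proof plan with its decisive step left open, not a proof. Moreover, the way you reduce it (that the conjugated operators $T_m^H Q_m^H T_m$ map vectors supported on the first $j+2k$ coordinates to vectors of the same kind) is both stronger and murkier than what is actually needed, and verifying it ``index by index'' for the full list of chasing rotations is exactly the work you do not do.

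The paper closes this step with a direct observation about the actual entries of row $j$: once column $j$ has been reduced, row $j$ is never modified from the left, and from the right it is modified only by the cleaning rotations $P_m^H$, because every rotation contained in a chasing factor $Q_m$ ($m>j$) acts, from the right, on pairs of columns in which row $j$ carries zero entries -- the chasing is confined to the trailing, not-yet-reduced part of the matrix. Hence $Q_m^H$ acts as the identity on row $j$, the induction you set up becomes trivial, and $\B w$ is identified concretely as the nonzero part of $e_j^H C^{(j)}$, whose length is $j+2k$ by the CMV (hence $2k$-banded) structure at the moment row $j$ is frozen. In other words, the missing idea is not a subspace-invariance property of a conjugated operator but the statement that the chasing rotations ``act on zero components'' of the row in question; if you want to complete your argument you should prove that zero-pattern statement (by locating which columns each family of chasing rotations touches relative to the current support of the already-reduced rows), after which the representation follows immediately with no need for the $T_m^H Q_m^H T_m$ machinery.
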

  
  \begin{proof}
    It follows directly by the preservation of the CMV structure
    (and so in particular of the $2k$-banded structure) and 
    by noting that after reducing the $j$-th column the 
    rotations on the left will not alter the $j$-th row
    anymore, and the ones on the right will update it 
    only when performing the first cleaning step, but 
    not the bulge chasing (since these will act on zero components). 
  \end{proof}
  
  Notice that a similar result also holds for the
  columns, since the situation is completely symmetric. 
  The above representation is usually known as 
  Givens--Vector representation \cite{vanbarel:book1,vanbarel:book2}, 
  and can also be seen as a particular 
  case of generators-based representation \cite{eidelman:book1,eidelman:book2}. 
  Being the matrix upper Hessenberg we only need to store the upper triangular
  part plus the subdiagonal, so the above representation needs only
  $O(nk)$ storage, since we need to represent $O(n)$ vectors of length
  $k$ and $O(nk)$ rotations.

  \section{Numerical results}\label{sec:numerical}

  In this section we report numerical experiments that validate
  the proposed algorithm. We tested the accuracy and the runtime
  of our implementations.

  The software can be downloaded for further testing
  at
  \url{http://numpi.dm.unipi.it/software/rank-structured-hessenberg-reduction}.
  The package contains two functions: 
  \begin{description}
  \item[\texttt{rshr\_dlr}] implements the Hessenberg reduction for a real diagonal
    plus low rank matrix. This function is implemented in MATLAB and FORTRAN, and
    a MATLAB wrapper to the FORTRAN code is provided and used automatically if
    possible. 
  \item[\texttt{rshs\_ulr}] implements the analogous reduction for the unitary
    plus low-rank case. It is only implemented as a MATLAB function. 
  \end{description}

  \subsection{The real diagonal plus low rank case}

  We have
  checked the complexity of our reduction strategy by fixing $n$
  and $k$, respectively, and comparing the runtime while
  varying the other parameter. We verified that 
  the complexity is quadratic in $n$ and 
  linear in $k$. 

  When measuring the complexity in $n$ we have also compared
  our timings with the \texttt{DGEHRD} function included in
  LAPACK 3.6.1.

  The tests have been run on a server with an Intel Xeon CPU
  E5-2697 running at 2.60GHz, and with 128 GB of RAM. The amount
  of RAM allowed us to test \texttt{DGEHRD} for large dimensions, which
  would have not been feasible on a desktop computer.

  The results that we report are for compiler optimizations turned off.
  This makes the dependency of the timings on the size
  and rank much clearer in the plots,
  and allows us to clearly connect the timings with the flop count. However,
  each experiment has been repeated with optimization turned on that 
  yields a gain of a factor between $2$ and $4$ in
  the timings, both for \texttt{DGEHRD} and for our method,
  so there is no significant difference when comparing the
  approaches. 

  \begin{figure}[t]
    \centering
    \begin{tikzpicture}
      \begin{loglogaxis}[
        legend pos = south east,
        width=\linewidth,
        height=.55\textwidth,
        xlabel = $n$,
        ylabel = Time (s)
        ]
        \addplot table {hermitian_tn.dat};
        \addplot table[y index=2] {hermitian_tn.dat};                
        \addplot table {hermitian_tnf.dat};
        \addplot[domain=8:16384, gray, dashed] {7e-8 * x^2};
        \legend{\texttt{rshr\_dlr} ($k = 4$), \texttt{rshr\_dlr} ($k = 32$),
          \texttt{DGEHRD} in \texttt{LAPACK}, $\mathcal O(n^2)$};
      \end{loglogaxis}
    \end{tikzpicture}
    \caption{Complexity of the Hessenberg reduction method in
      dependency of the size $n$. The quasiseparable rank $k$ has
      been chosen equal to $4$ in this example. In each example
      the tests have been run only when $k < n$.}
    \label{fig:hermitian_comp_n}
  \end{figure}
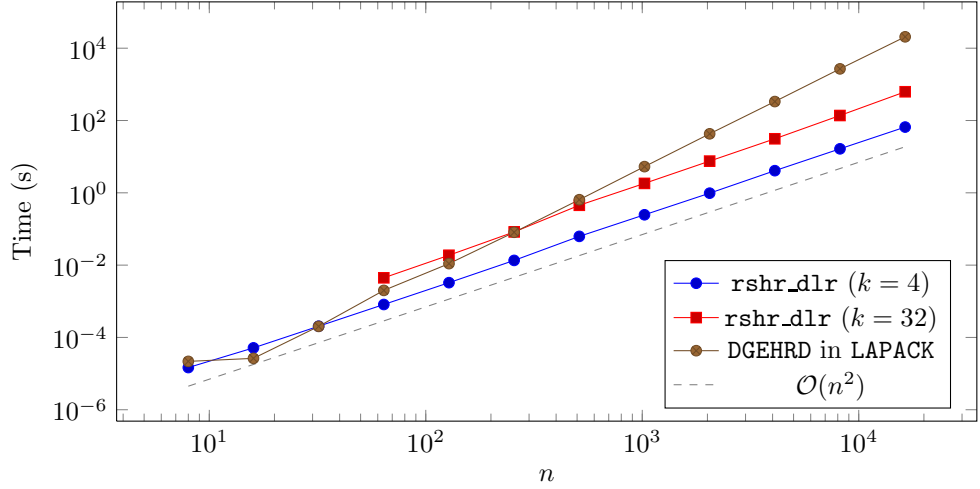

  Figure~\ref{fig:hermitian_comp_n} shows the complexity of our
  method as a function of the size $n$. The behavior is clearly
  quadratic, as shown by the dashed line in the plot. It is also
  clear that the method is faster than LAPACK for relatively
  small dimensions (in this case, $n \approx 16$) if the rank is
  small. Given that the ratio between the two complexities is
  $n^3 / n^2 k = \frac{n}{k}$ this suggests that our approach is
  faster when $k \lessapprox \frac{n}{8}$. As an example, consider the timings
  for $k = 32$ that are also reported in Figure~\ref{fig:hermitian_comp_n};
  we expect our approach to be faster when $n > 8k = 256$, and
  this is confirmed by the plot. 

  In particular, our constant in front of the complexity is just
  $8$ times larger than a dense Hessenberg reduction. We believe
  that a clever use of blocking might even improve this result,
  and could be the subject of future investigation. 
  
    \begin{figure}[t]
    \centering
    \begin{tikzpicture}
      \begin{loglogaxis}[
        legend pos = south east,
        width=\linewidth,
        height=.55\textwidth,
        xlabel = $k$,
        ylabel = Time (s)
        ]
        \addplot table {hermitian_tk.dat};
        \addplot table[x index =0, y index = 2] {hermitian_tk.dat};
        \addplot[domain=1:256, samples=50, gray, dashed] {1.5e-1 * x};
        \legend{\texttt{rshr\_dlr}, \texttt{DGEHRD} in \texttt{LAPACK}, $\mathcal O(n)$};
      \end{loglogaxis}
    \end{tikzpicture}
    \caption{Complexity of the Hessenberg reduction method in
      dependency of the quasiseparable rank $k$.
      The dimension $n$ has
      been chosen equal to $2048$ in this example, and
      rank varies from $1$ to $256$.}
    \label{fig:hermitian_comp_k}
  \end{figure}
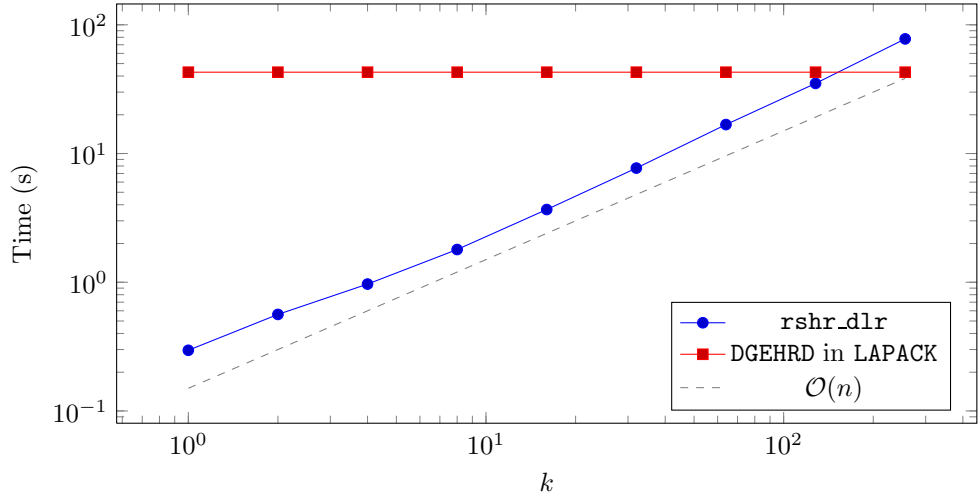

  We performed a similar analysis for the cost as a function
  of the rank. Figure~\ref{fig:hermitian_comp_k} shows that
  the growth of the complexity with respect to $k$ is essentially
  linear. We have taken a matrix of size $2048$ and we have compared
  the performance of our own implementation with the one in
  LAPACK. It is worth noting that since our implementation
  only requires $\mathcal O(nk)$ storage if $n$ grows large enough
  then it also provides an alternative when a dense reduction
  cannot be applied due to memory constraints. Moreover, the linear
  behavior is very clear from the plot. 

  Our implementation relies on a compressed diagonal
  storage scheme (CDS) in order to efficiently operate
  on the small bandwidth, and it never needs to store
  the large $n \times n$ matrix.

  Finally, we have tested the backward stability of our approach.  In
  order to do so we have computed the Hessenberg form with our fast
  method, along with the change of basis $Q$ by accumulating all
  the plane rotations. Then, we have measured
  the norm of $A - Q^HHQ$. The results are reported in
  Figure~\ref{fig:hermitian_back}.

    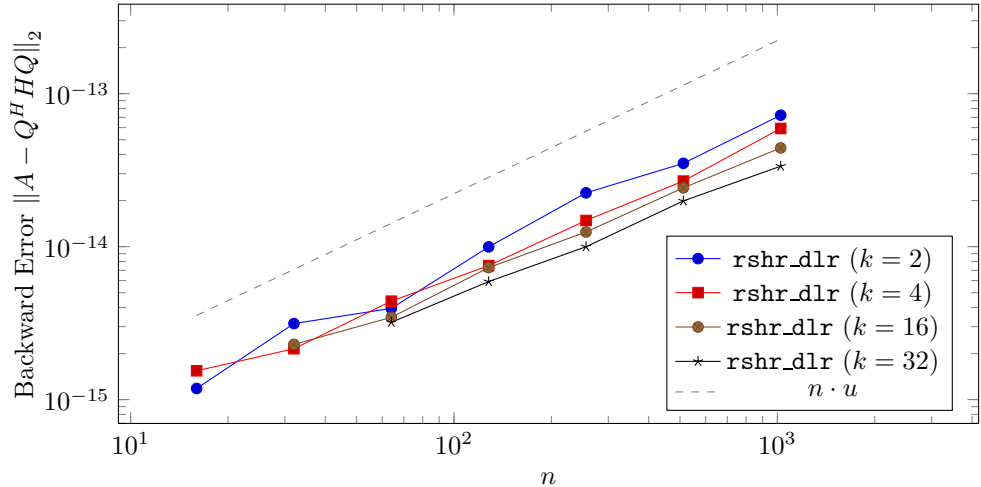
\begin{figure}[t]
    \centering
    \begin{tikzpicture}
      \begin{loglogaxis}[
        legend pos = south east,
        width=\linewidth,
        height=.55\textwidth,
        xlabel = $n$,
        ylabel = Backward Error $\norm{A - Q^HHQ}_2$,
        xmax = 4192
        ]
        \addplot table {hermitian_backward.dat};
        \addplot table[y index = 2] {hermitian_backward.dat};
        \addplot table[y index = 3] {hermitian_backward.dat};
        \addplot table[y index = 4] {hermitian_backward.dat};
        \addplot[domain=16:1024, dashed, gray] {2.22e-16 * x};
        \legend{\texttt{rshr\_dlr} ($k = 2$),
          \texttt{rshr\_dlr} ($k = 4$),
          \texttt{rshr\_dlr} ($k = 16$),
          \texttt{rshr\_dlr} ($k = 32$),
          $n \cdot u$
        }
      \end{loglogaxis}
    \end{tikzpicture}
    \caption{Relative backward error on the original matrix of the computed
      Hessenberg form, for various values of $n$ and $k$ in
      the real diagonal plus low rank case.
      The dashed line represents the machine precision
      multiplied by the size of the matrix.}
    \label{fig:hermitian_back}
  \end{figure}

  The experiments show that our method is backward stable with
  a backward error smaller than $\mathcal O(nu)$, where $n$
  is the size of the matrix and $u$ the unit roundoff. In practice
  one can prove from Algorithm in Section~\ref{recomul} that
  the backward stability is achieved with a low-degree polynomial
  in $n$ and $k$ times the unit roundoff, obtaining an upper
  bound of the form $\mathcal O(n^2 k u)$.

  The numerical results have been obtained by averaging different
  runs of the algorithm on random matrices.
  
\subsection{The unitary case}

In the unitary diagonal plus low rank case we have used our MATLAB
implementation to test the accuracy of the algorithm. We also report
some results on the timings that show that the asymptotic complexity
is the one we expect.

The implementation of the unitary plus low-rank case is slightly
more involved of the Hermitian one, mainly due to the bookkeeping
required to handle the application of plane rotations in the
correct order. Nevertheless, the software developed for the
real diagonal plus low rank case could be adapted to efficiently
handle also this case. 

    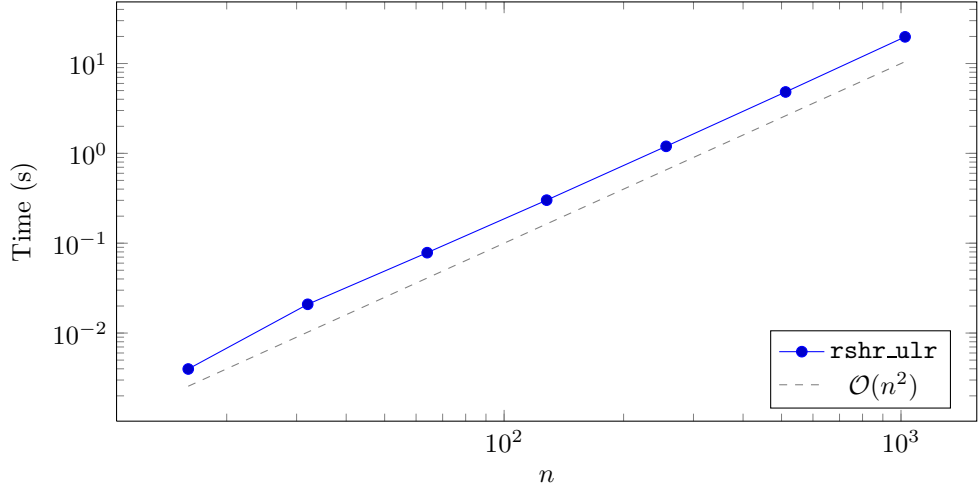
\begin{figure}[t]
    \centering
    \begin{tikzpicture}
      \begin{loglogaxis}[
        legend pos = south east,
        width=\linewidth,
        height=.55\textwidth,
        xlabel = $n$,
        ylabel = Time (s)
        ]
        \addplot table {unitary_tn.dat};
        % \addplot table {hermitian_tnf.dat};
        \addplot[domain=16:1024, samples=50, gray, dashed] {1e-5 * x^2};
        \legend{\texttt{rshr\_ulr}, $\mathcal O(n^2)$};
      \end{loglogaxis}
    \end{tikzpicture}
    \caption{Complexity of the Hessenberg reduction method in
      dependency of the dimension $n$
      the unitary plus low rank case. 
      The dimension $n$ has been chosen between $16$ and $1024$, with
      a fixed rank $k = 4$.}
    \label{fig:unitary_comp_n}
  \end{figure}

      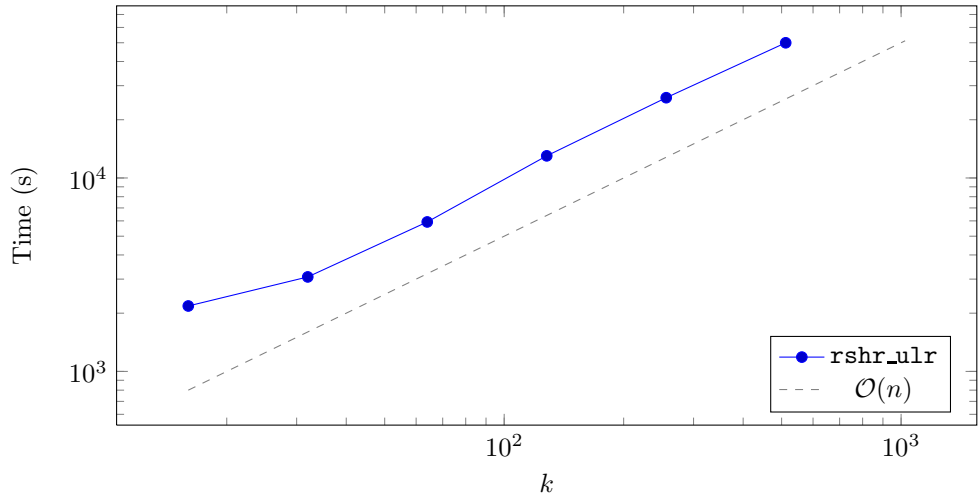
\begin{figure}[t]
    \centering
    \begin{tikzpicture}
      \begin{loglogaxis}[
        legend pos = south east,
        width=\linewidth,
        height=.55\textwidth,
        xlabel = $k$,
        ylabel = Time (s)
        ]
        \addplot table {unitary_tk.dat};
        % \addplot table {hermitian_tnf.dat};
        \addplot[domain=16:1024, samples=50, gray, dashed] {.5e2 * x};
        \legend{\texttt{rshr\_ulr}, $\mathcal O(n)$};
      \end{loglogaxis}
    \end{tikzpicture}
    \caption{Complexity of the Hessenberg reduction method in
      dependency of the quasiseparable rank $k$ for
      the unitary plus low rank case. 
      The rank $k$ has been chosen between $16$ and $512$, with
      a fixed size $n = 8192$.}
    \label{fig:unitary_comp_k}
  \end{figure}

  We have reported the results for the timings (taken in MATLAB)
  for our implementation of the Hessenberg reduction of unitary
  plus low rank matrices in Figure~\ref{fig:unitary_comp_n} and
  Figure~\ref{fig:unitary_comp_k}. These two figures report the
  dependency on the size (which is quadratic) and on the quasiseparable
  rank $k$ (which is linear).

  We believe that an efficient implementation of this reduction
  could be faster than LAPACK for small dimension, as it happens
  for the real diagonal plus low rank case. However, due to the
  increased complexity, it is likely that this method will be
  faster for slightly larger ratios of $\frac n k$, compared to
  what we have for the real diagonal plus low rank case. 

    \begin{figure}[t]
    \centering
    \begin{tikzpicture}
      \begin{loglogaxis}[
        legend pos = south east,
        width=\linewidth,
        height=.55\textwidth,
        xlabel = $n$,
        ylabel = Backward Error $\norm{A - Q^H HQ}_2$,
        xmax = 4192
        ]
        \addplot table {unitary_backward.dat};
        \addplot table[y index = 2] {unitary_backward.dat};
        \addplot table[y index = 3] {unitary_backward.dat};
        \addplot table[y index = 4] {unitary_backward.dat};
        \addplot[domain=16:1024, dashed, gray] {2.22e-16 * x};
        \legend{\texttt{rshr\_ulr} ($k = 2$),
          \texttt{rshr\_ulr} ($k = 4$),
          \texttt{rshr\_ulr} ($k = 16$),
          \texttt{rshr\_ulr} ($k = 32$),
          $n \cdot u$
        }
      \end{loglogaxis}
    \end{tikzpicture}
    \caption{Relative backward error on the original matrix of the computed
      Hessenberg form, for various values of $n$ and $k$ in
      the unitary plus low rank case.
      The dashed line represents the machine precision
      multiplied by the size of the matrix.}
    \label{fig:unitary_back}
  \end{figure}
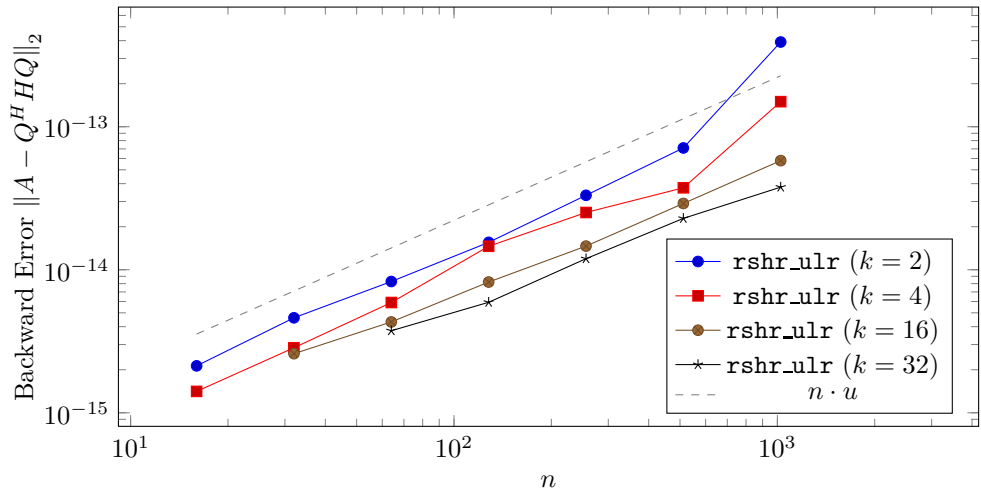

  Similarly to the Hermitian case, we have also performed numerical
  experiments with the purpose of estimating the backward error.
  The results are reported in Figure~\ref{fig:unitary_back}
  and are completely analogous
  to the ones obtained for the real diagonal plus low rank case. 

\section{Conclusions and Future Work}\label{sec:conclusions} 
In this paper we have presented a fast algorithm for reducing a
$n\times n$ real/unitary diagonal $D$ plus a low rank correction in
upper Hessenberg form. To our knowledge this is the first algorithm
which is efficient w.r.t the size $k$ of the correction by requiring
$O(n^2 k)$ flops and provides a viable alternative to
the dense LAPACK routine for small sizes.
The approach for the unitary case relies upon some
theoretical and computational 
properties of block unitary CMV matrices  of   independent
interest  for computations with orthogonal  matrix polynomials.
The application of our algorithm for solving
interpolation-based linearizations of generalized eigenvalue problems
 is still an ongoing research.

 In particular, the output of both reduction algorithms is already
 in a rank structured form which could be exploited in an
 iteration (such as the QR method) to compute the eigenvalues
 of the matrix $A$. This could provide a method with optimal
 complexity for the computation of eigenvalues of matrix polynomials,
 expressed in monomial and interpolation bases.

\bibliographystyle{siamplain}
\bibliography{redbib}

\begin{thebibliography}{10}

\bibitem{ACL}
{\sc A.~Amiraslani, R.~M. Corless, and P.~Lancaster}, {\em Linearization of
  matrix polynomials expressed in polynomial bases}, IMA J. Numer. Anal., 29
  (2009), pp.~141--157, \href{http://dx.doi.org/10.1093/imanum/drm051}
  {doi:10.1093/imanum/drm051}, \url{http://dx.doi.org/10.1093/imanum/drm051}.

\bibitem{gragg_ammar}
{\sc G.~S. Ammar and W.~B. Gragg}, {\em {$O(n^2)$} reduction algorithms for the
  construction of a band matrix from spectral data}, SIAM J. Matrix Anal.
  Appl., 12 (1991), pp.~426--431, \href{http://dx.doi.org/10.1137/0612030}
  {doi:10.1137/0612030}, \url{http://dx.doi.org/10.1137/0612030}.

\bibitem{AG}
{\sc P.~Arbenz and G.~H. Golub}, {\em On the spectral decomposition of
  {H}ermitian matrices modified by low rank perturbations with applications},
  SIAM J. Matrix Anal. Appl., 9 (1988), pp.~40--58,
  \href{http://dx.doi.org/10.1137/0609004} {doi:10.1137/0609004},
  \url{http://dx.doi.org/10.1137/0609004}.

\bibitem{arli}
{\sc Y.~Arlinski{\u\i}}, {\em Conservative discrete time-invariant systems and
  block operator {CMV} matrices}, Methods Funct. Anal. Topology, 15 (2009),
  pp.~201--236.

\bibitem{BDCG}
{\sc R.~Bevilacqua, G.~M. Del~Corso, and L.~Gemignani}, {\em A {CMV}-based
  eigensolver for companion matrices}, SIAM J. Matrix Anal. Appl., 36 (2015),
  pp.~1046--1068, \href{http://dx.doi.org/10.1137/140978065}
  {doi:10.1137/140978065}, \url{http://dx.doi.org/10.1137/140978065}.

\bibitem{bevilacqua2015compression}
{\sc R.~Bevilacqua, G.~M. Del~Corso, and L.~Gemignani}, {\em Compression of
  unitary rank-structured matrices to {CMV}-like shape with an application to
  polynomial rootfinding}, J. Comput. Appl. Math., 278 (2015), pp.~326--335,
  \href{http://dx.doi.org/10.1016/j.cam.2014.09.023}
  {doi:10.1016/j.cam.2014.09.023},
  \url{http://dx.doi.org/10.1016/j.cam.2014.09.023}.

\bibitem{BR}
{\sc D.~A. Bini and L.~Robol}, {\em On a class of matrix pencils and
  $\ell$-ifications equivalent to a given matrix polynomial}, Linear Algebra
  Appl.,  (2015), \href{http://dx.doi.org/10.1016/j.laa.2015.07.017}
  {doi:10.1016/j.laa.2015.07.017}.

\bibitem{bini2015quasiseparable}
{\sc D.~A. Bini and L.~Robol}, {\em Quasiseparable hessenberg reduction of real
  diagonal plus low rank matrices and applications}, Linear Algebra Appl.,
  (2015), \href{http://dx.doi.org/10.1016/j.laa.2015.08.026}
  {doi:10.1016/j.laa.2015.08.026}.

\bibitem{bls}
{\sc C.~H. Bischof, B.~Lang, and X.~Sun}, {\em A framework for symmetric band
  reduction}, ACM Trans. Math. Software, 26 (2000), pp.~581--601,
  \href{http://dx.doi.org/10.1145/365723.365735} {doi:10.1145/365723.365735},
  \url{http://dx.doi.org/10.1145/365723.365735}.

\bibitem{BGE}
{\sc A.~Bunse-Gerstner and L.~Elsner}, {\em Schur parameter pencils for the
  solution of the unitary eigenproblem}, Linear Algebra Appl., 154/156 (1991),
  pp.~741--778, \href{http://dx.doi.org/10.1016/0024-3795(91)90402-I}
  {doi:10.1016/0024-3795(91)90402-I},
  \url{http://dx.doi.org/10.1016/0024-3795(91)90402-I}.

\bibitem{cantero2003five}
{\sc M.~J. Cantero, L.~Moral, and L.~Vel{\'a}zquez}, {\em Five-diagonal
  matrices and zeros of orthogonal polynomials on the unit circle}, Linear
  Algebra Appl., 362 (2003), pp.~29--56,
  \href{http://dx.doi.org/10.1016/S0024-3795(02)00457-3}
  {doi:10.1016/S0024-3795(02)00457-3},
  \url{http://dx.doi.org/10.1016/S0024-3795(02)00457-3}.

\bibitem{DB_red}
{\sc S.~Delvaux and M.~Van~Barel}, {\em A {H}essenberg reduction algorithm for
  rank structured matrices}, SIAM J. Matrix Anal. Appl., 29 (2007),
  pp.~895--926 (electronic), \href{http://dx.doi.org/10.1137/060658953}
  {doi:10.1137/060658953}, \url{http://dx.doi.org/10.1137/060658953}.

\bibitem{EGG_alg}
{\sc Y.~Eidelman, L.~Gemignani, and I.~Gohberg}, {\em Efficient eigenvalue
  computation for quasiseparable {H}ermitian matrices under low rank
  perturbations}, Numer. Algorithms, 47 (2008), pp.~253--273,
  \href{http://dx.doi.org/10.1007/s11075-008-9172-0}
  {doi:10.1007/s11075-008-9172-0},
  \url{http://dx.doi.org/10.1007/s11075-008-9172-0}.

\bibitem{EGG_red}
{\sc Y.~Eidelman, I.~Gohberg, and L.~Gemignani}, {\em On the fast reduction of
  a quasiseparable matrix to {H}essenberg and tridiagonal forms}, Linear
  Algebra Appl., 420 (2007), pp.~86--101,
  \href{http://dx.doi.org/10.1016/j.laa.2006.06.028}
  {doi:10.1016/j.laa.2006.06.028},
  \url{http://dx.doi.org/10.1016/j.laa.2006.06.028}.

\bibitem{eidelman:book1}
{\sc Y.~Eidelman, I.~Gohberg, and I.~Haimovici}, {\em Separable type
  representations of matrices and fast algorithms. {V}ol. 1}, vol.~234 of
  Operator Theory: Advances and Applications, Birkh\"auser/Springer, Basel,
  2014.
\newblock Basics. Completion problems. Multiplication and inversion algorithms.

\bibitem{eidelman:book2}
{\sc Y.~Eidelman, I.~Gohberg, and I.~Haimovici}, {\em Separable type
  representations of matrices and fast algorithms. {V}ol. 2}, vol.~235 of
  Operator Theory: Advances and Applications, Birkh\"auser/Springer Basel AG,
  Basel, 2014.
\newblock Eigenvalue method.

\bibitem{FM}
{\sc M.~Fiedler and T.~L. Markham}, {\em Completing a matrix when certain
  entries of its inverse are specified}, Linear Algebra Appl., 74 (1986),
  pp.~225--237, \href{http://dx.doi.org/10.1016/0024-3795(86)90125-4}
  {doi:10.1016/0024-3795(86)90125-4},
  \url{http://dx.doi.org/10.1016/0024-3795(86)90125-4}.

\bibitem{gohberg1982matrix}
{\sc I.~Gohberg, P.~Lancaster, and L.~Rodman}, {\em Matrix polynomials},
  Academic Press, Inc. [Harcourt Brace Jovanovich, Publishers], New
  York-London, 1982.
\newblock Computer Science and Applied Mathematics.

\bibitem{gustafson1984note}
{\sc W.~H. Gustafson}, {\em A note on matrix inversion}, Linear Algebra Appl.,
  57 (1984), pp.~71--73, \href{http://dx.doi.org/10.1016/0024-3795(84)90177-0}
  {doi:10.1016/0024-3795(84)90177-0},
  \url{http://dx.doi.org/10.1016/0024-3795(84)90177-0}.

\bibitem{KKQQ}
{\sc B.~K{\aa}gstr{\"o}m, D.~Kressner, E.~S. Quintana-Ort{\'{\i}}, and
  G.~Quintana-Ort{\'{\i}}}, {\em Blocked algorithms for the reduction to
  {H}essenberg-triangular form revisited}, BIT, 48 (2008), pp.~563--584,
  \href{http://dx.doi.org/10.1007/s10543-008-0180-1}
  {doi:10.1007/s10543-008-0180-1},
  \url{http://dx.doi.org/10.1007/s10543-008-0180-1}.

\bibitem{killip2007cmv}
{\sc R.~Killip and I.~Nenciu}, {\em C{MV}: the unitary analogue of {J}acobi
  matrices}, Comm. Pure Appl. Math., 60 (2007), pp.~1148--1188,
  \href{http://dx.doi.org/10.1002/cpa.20160} {doi:10.1002/cpa.20160},
  \url{http://dx.doi.org/10.1002/cpa.20160}.

\bibitem{simon}
{\sc B.~Simon}, {\em C{MV} matrices: five years after}, J. Comput. Appl. Math.,
  208 (2007), pp.~120--154, \href{http://dx.doi.org/10.1016/j.cam.2006.10.033}
  {doi:10.1016/j.cam.2006.10.033},
  \url{http://dx.doi.org/10.1016/j.cam.2006.10.033}.

\bibitem{barel}
{\sc M.~Van~Barel and A.~Bultheel}, {\em Orthonormal polynomial vectors and
  least squares approximation for a discrete inner product}, Electron. Trans.
  Numer. Anal., 3 (1995), pp.~1--23 (electronic).

\bibitem{vanbarel:book1}
{\sc R.~Vandebril, M.~Van~Barel, and N.~Mastronardi}, {\em Matrix computations
  and semiseparable matrices. {V}ol. 1}, Johns Hopkins University Press,
  Baltimore, MD, 2008.
\newblock Linear systems.

\bibitem{vanbarel:book2}
{\sc R.~Vandebril, M.~Van~Barel, and N.~Mastronardi}, {\em Matrix computations
  and semiseparable matrices. {V}ol. {II}}, Johns Hopkins University Press,
  Baltimore, MD, 2008.
\newblock Eigenvalue and singular value methods.

\end{thebibliography}
\end{document}